\renewcommand\@biblabel[1]{[#1]}
\begin{document}
\title{Inertial extragradient algorithms for solving variational inequalities  and fixed point problems}
\titlerunning{Inertial extragradient-Mann algorithms}
\author{Bing Tan\and Jingjing Fan\and  Xiaolong Qin}
\authorrunning{B. Tan, J. Fan, X. Qin}

\institute{
Bing Tan \and Jingjing Fan \at  Institute of Fundamental and Frontier Sciences, University of Electronic Science and Technology of China, Chengdu 611731, China  \\
\href{mailto:bingtan72@gmail.com}{bingtan72@gmail.com} (B.Tan), \href{mailto:fanjingjing0324@163.com}{fanjingjing0324@163.com} (J.Fan)
\and
Xiaolong Qin ${(\textrm{\Letter})}$\at
Department of Mathematics, Hangzhou Normal University, Hangzhou 310000, China \\
\href{mailto:qxlxajh@163.com}{qxlxajh@163.com}
}

\date{Received: date / Accepted: date}

\maketitle

\begin{abstract}
The objective of this research is to explore a convex feasibility problem, which consists of a monotone variational inequality problem and a fixed point problem. We introduce four inertial extragradient algorithms that are motivated by the inertial method, the subgradient extragradient method, the Tseng's extragradient method and the Mann-type method endowed with a simple step size. Strong convergence theorems of the algorithms are established under some standard and suitable conditions enforced by the cost operators. Finally, we implement some computational tests to show the efficiency and advantages of the proposed algorithms and compare them with some existing ones.

\keywords{Variational inequality problem \and Fixed point problem \and Subgradient extragradient method \and Tseng's extragradient method \and Inertial method}
\subclass {47H09 \and 47H10 \and 47J20  }
\end{abstract}

\section{Introduction}
Let $C$ be a nonempty closed convex set in a real Hilbert space $H$  whose induced norm and  inner product are denoted by $\| \cdot\|$ and $\langle \cdot, \cdot \rangle$, respectively. One recalls that the   variational inequality problem (shortly, VIP) is described as follows:
\begin{equation*}\label{VIP}
\begin{aligned}
\text{find } p \in C \text{ such that } \langle Ap,x-p \rangle \geq 0, \quad \forall x \in C \,,
\end{aligned}
\tag{\text{VIP}}
\end{equation*}
where $ A:H\rightarrow H $ is a  nonlinear operator. Let $\mathrm{VI}(C,A)$ represent   the solution set of the problem~\eqref{VIP}. Variational inequality is an essential tool for studying many fields of mathematics and applied science (such as physics, regional, social, engineering, and other issues); see, for example, \cite{IY1,QA,ANQ,SYVS,AIY}. The theories and methods of variational inequalities have been implemented in numerous areas of science and have proven to be successful and creative. The theory has been shown to provide an easy, common, and consistent structure for dealing with possible issues. In the past few decades, researchers have been very interested in developing effective and robust numerical approaches for solving variational inequality problems. In particular, there has been great interest in projection methods and their variants. To see various projection methods, one refers to~\cite{EGM,SEGM,Tseng} and the references therein. It should be mentioned that the extragradient method~\cite{EGM} needs to perform two projection calculations on the feasible set in each iteration, while the subgradient extragradient method~\cite{SEGM} and the Tseng's extragradient method~\cite{Tseng} only require one projection on the feasible set. It is well known that calculating the projection on a non-empty closed convex set is not easy, especially when it has a complex structure. Thus, these two methods greatly improve computational performance in the actual environment.

On the other hand, the fixed point problem is closely related to variational inequalities. A point $p \in H$ is called a fixed point of mapping $T : H \rightarrow H$ if $Tp = p$. We use $\operatorname{Fix}(T)$ to denote the fixed point set of $T$. Our main objective in this paper is to find general solutions to variational inequality problems and fixed point problems. The reason for exploring these problems is that they can be applied to mathematical models, and their constraints can be represented as fixed-point problems and/or variational inequality problems. In recent years, researchers have investigated and proposed many efficient iterative approaches to find common solutions for variational inequality problems and fixed point problems, see, for instance, \cite{Ceng,Nade,CHW,QCC,YP} and the references therein. Recently,  Kraikaew and Saejung~\cite{KS}  proposed an algorithm for finding a common solution to monotone variational inequalities and fixed point problems. This algorithm is based on the Halpern method and the subgradient extragradient method and is now called the Halpern subgradient extragradient method (HSEGM). Indeed, the algorithm is of the form:
\begin{equation*}\label{HSEGM}
\left\{\begin{aligned}
&y^{k}=P_{C}(x^{k}-\gamma A x^{k})\,, \\
&z^{k}=\theta_{k} x^{0}+(1-\theta_{k}) P_{H_{k}}(x^{k}-\gamma A y^{k})\,, \\
&H_{k}=\big\{x \in H \mid \langle x^{k}-\gamma A x^{k}-y^{k}, x-y^{k}\rangle \leq 0\big\}\,, \\
&x^{k+1}=\eta_{k} x^{k}+(1-\eta_{k}) T z^{k}\,,
\end{aligned}\right.
\tag{\text{HSEGM}}
\end{equation*}
where $ P_{C} $ stands for the metric projection of $H$ onto $C$ ($P_{C}x:= \operatorname{argmin}\{\|x-y\|,\, y \in C\}$), mapping $A: H\rightarrow H$ is monotone and $L$-Lipschitz continuous, the step size $ \gamma $ is a fixed number and belongs to $(0,1/L) $, and mapping $T: H \rightarrow H$ is quasi-nonexpansive  (see below for the definition). They proved that the iterative sequence $\{x^{k}\}$ defined in \eqref{HSEGM} converges  to $P_{\operatorname{F i x}(T) \cap \mathrm{VI}(C,A)} x^{0}$ in norm under some suitable conditions. However,   Algorithm~\eqref{HSEGM} needs to know the prior information of the Lipschitz constant of the mapping, which may limit the use of some related algorithms. To overcome such difficulty, a large number of algorithms have been proposed to update the step size through certain adaptive criteria, see, for example, \cite{ShefuNA,Liu,VTEGM}. Recently, Tong and Tian~\cite{VTEGM} proposed a new self-adaptive iterative algorithm to solve variational inequality problems and fixed point problems in a Hilbert space. Their algorithm is motivated by the Tseng's extragradient method, the hybrid steepest descent method and the Mann-type method. The adaptive criterion adopted can guarantee that the algorithm works without knowing the Lipschitz constant of the cost mapping. Their algorithm is described as follows:
\begin{equation*}\label{STEGM}
\left\{\begin{aligned}
&y^{k}=P_{C}(x^{k}-\gamma_{k} A x^{k})\,, \\
&z^{k}=y^{k}-\gamma_{k}(A y^{k}-A x^{k})\,, \\
&t^{k}=(1-\eta_{k}) z^{k}+\eta_{k} T z^{k}\,, \\
&x^{k+1}=(1-\lambda \theta_{k} F) t^{k}\,,
\end{aligned}\right.
\tag{\text{STEGM}}
\end{equation*}
where mapping $A: H\rightarrow H$ is  monotone and $L$-Lipschitz continuous, mapping $ T: H\rightarrow H $ is quasi-nonexpansive with a demiclosedness property and mapping $F: H \rightarrow H$ is strongly monotone and Lipschitz continuous.
The step size $\gamma_{k}$ will be automatically updated in each iteration by selecting the maximum $\gamma \in\left\{\rho, \rho l, \rho l^{2}, \ldots\right\}$ that satisfies $ \gamma\|A x^{k}-A y^{k}\| \leq \phi\|x^{k}-y^{k}\| $ (this rule is called the Armijo-like line search criterion). Under some suitable conditions, the iterative sequence generated by \eqref{STEGM}  converges to $z=P_{\operatorname{Fix}(T)\cap\mathrm{VI}(C,A)}(I-\gamma F)z$ in norm.

In this paper, we focus on the situation that  $T$ is a demicontractive mapping, which covers quasi-nonexpansive mappings. In 2018, Thong and Hieu~\cite{THSEGM} proposed two Mann-type subgradient extragradient algorithms to find common elements of variational inequalities and fixed point problems involving a demicontractive mapping. More precisely, their iterative algorithms are as follows:
\begin{equation*}\label{MSEGM}
\left\{\begin{aligned}
&y^{k}=P_{C}(x^{k}-\gamma A x^{k})\,, \\
&z^{k}=P_{H_{k}}(x^{k}-\gamma A y^{k})\,, \\
&H_{k}=\big\{x \in H \mid \langle x^{k}-\gamma A x^{k}-y^{k}, x-y^{k}\rangle \leq 0\big\}\,, \\
&x^{k+1}=(1-\theta_{k}-\eta_{k}) z^{k}+\eta_{k} T z^{k}\,,
\end{aligned}\right.
\tag{\text{MSEGM}}
\end{equation*}
and
\begin{equation*}\label{MMSEGM}
\left\{\begin{aligned}
&y^{k}=P_{C}(x^{k}-\gamma A x^{k})\,, \\
&z^{k}=P_{H_{k}}(x^{k}-\gamma A y^{k})\,, \\
&H_{k}=\big\{x \in H \mid \langle x^{k}-\gamma A x^{k}-y^{k}, x-y^{k}\rangle \leq 0\big\}\,, \\
&x^{k+1}=(1-\eta_{k})(\theta_{k} z^{k})+\eta_{k} T z^{k}\,,
\end{aligned}\right.
\tag{\text{MMSEGM}}
\end{equation*}
where mapping $A: H\rightarrow H$ is  monotone and $L$-Lipschitz continuous, step size $ \gamma\in(0,1/L) $ and mapping $T: H \rightarrow H$ is  $\lambda$-demicontractive with $0 \leq \lambda<1$. They obtained strong convergence theorems of the suggested algorithms in real Hilbert spaces under some suitable and mild assumptions.

Note that algorithms \eqref{MSEGM} and \eqref{MMSEGM} require to know the prior information of the Lipschitz constant of the cost mapping. In addition, we point out that the method of updating the step size through the Armijo-like criterion may be computationally expensive because it needs to calculate the value of $A$ many times in each iteration. To overcome these shortcomings, one method is to update the step size through some simple calculations in each iteration. Recently, Thong and Hieu~\cite{TVNA} introduced two extragradient viscosity-type iterative algorithms with new simple step size to solve variational inequalities and fixed point problems. Their algorithms are of the following forms:
\begin{equation*}\label{VSEGM}
\left\{\begin{aligned}
&y^{k}=P_{C}(x^{k}-\gamma_{k} A x^{k})\,, \\
&z^{k}=P_{H_{k}}(x^{k}-\gamma_{k} A y^{k})\,, \\
&H_{k}=\big\{x \in H \mid \langle x^{k}-\gamma_{k} A x^{k}-y^{k}, x-y^{k}\rangle \leq 0\big\}\,, \\
&x^{k+1}=\theta_{k} f(x^{k})+(1-\theta_{k})\big[(1-\eta_{k}) z^{k}+\eta_{k} T z^{k}\big]\,,
\end{aligned}\right.
\tag{\text{VSEGM}}
\end{equation*}
and
\begin{equation*}\label{VTEGM}
\left\{\begin{aligned}
&y^{k}=P_{C}(x^{k}-\gamma_{k} A x^{k})\,, \\
&z^{k}=y^{k}-\gamma_{k}(A y^{k}-A x^{k})\,, \\
&x^{k+1}=\theta_{k} f(x^{k})+(1-\theta_{k})\big[(1-\eta_{k}) z^{k}+\eta_{k} T z^{k}\big]\,,
\end{aligned}\right.
\tag{\text{VTEGM}}
\end{equation*}
where mapping $A: H\rightarrow H$ is  monotone and $L$-Lipschitz continuous, mapping $T: H \rightarrow H$ is  $\lambda$-demicontractive and mapping $ f $ is contractive, algorithms \eqref{VSEGM} and \eqref{VTEGM} update the step size $ \{\gamma_{k}\} $ by the following rules:
\[
\gamma_{k+1}=\left\{\begin{array}{ll}
\min \left\{\frac{\phi\|x^{k}-y^{k}\|}{\|A x^{k}-A y^{k}\|}, \gamma_{k}\right\}, & \text { if } A x^{k}-A y^{k} \neq 0; \\
\gamma_{k}, & \text { otherwise}.
\end{array}\right.
\]
It should be highlighted that algorithms \eqref{STEGM}, \eqref{VSEGM} and \eqref{VTEGM} only need to compute the projection on the feasible set $ C $ once in each iteration, and they can work without the prior information of the Lipschitz constant of cost mapping. These algorithms have achieved strong convergence theorems in real Hilbert spaces Under some suitable conditions.

In recent years, the development of fast iterative algorithms has aroused great interest from scientific researchers. The inertial algorithm is a two-stage iterative procedure. Its main feature is to use the previous two iterations to represent the next iteration. Many authors have used inertial methods to build a large number of iterative algorithms that can improve the convergence speed; see, for example,  \cite{Fan,iFB,tanjnca,Tan,SLD} and the references therein. These inertial-type algorithms have better numerical performance than algorithms without inertial terms.

Motivated and stimulated by results as mentioned above, in this paper, we suggest four new inertial Mann-type extragradient algorithms by inserting the inertial terms into the Tseng's extragradient algorithm and the subgradient extragradient algorithm. They are used to find a common element of the solution set of the monotone variational inequality problem and the fixed point set of a demicontractive mapping. We automatically update the step size in each iteration through a simple adaptive criterion, which allows the algorithms to work without knowing the Lipschitz constant of the mapping in advance. We obtain strong convergence  of these algorithms under some standard and mild hypotheses. Finally, we give several numerical examples to support the theoretical results. Numerical results show that the new algorithm converges faster than existing algorithms.

The remaining part of the paper proceeds as follows: In the next Section, we recall some preliminary results. In Section~\ref{sec3}, we analyze the convergence of the proposed algorithm. In Section~\ref{sec4}, some computational tests are provided to illustrate the numerical behavior of the proposed algorithms and compare them with existing  ones. Finally, a brief summary is given in Section~\ref{sec5}, the last section.
\section{Preliminaries}\label{sec2}
Throughout this paper, we always assume that $H$ represents Hilbert space and $C$ denotes the nonempty convex and closed subset of $ H $. The weak convergence and strong convergence of $\big\{x^{k}\big\}_{k=1}^{\infty}$ to $x$ are represented by $x^{k} \rightharpoonup x$ and $x^{k} \rightarrow x$, respectively. For each $x, y \in H$ and $\theta \in \mathbb{R}$, we have the following basic inequalities:
\begin{itemize}
\item $\|x+y\|^{2} \leq\|x\|^{2}+2\langle y, x+y\rangle$;
\item $\|\theta x+(1-\theta) y\|^{2}=\theta\|x\|^{2}+(1-\theta)\|y\|^{2}-\theta(1-\theta)\|x-y\|^{2}$.
\end{itemize}

It is known that $P_{C}$ has the following basic properties:
\begin{itemize}
\item $ \langle x-P_{C} x, y-P_{C} x\rangle \leq 0, \, \forall y \in C $;
\item $\|P_{C} x-P_{C} y\|^{2} \leq\langle P_{C} x-P_{C} y, x-y\rangle, \,\forall y \in H$.
\end{itemize}

\begin{definition}
Suppose that nonlinear operator $T: H \rightarrow H$ satisfies  $\operatorname{Fix}(T) \neq \emptyset $. If for any $\left\{x^{k}\right\}\subset H$, $ x^{k} \rightharpoonup x \text { and }(I-T) x^{k} \rightarrow 0 $ implies that $ x \in \operatorname{Fix}(T) $. Then $I-T$ is said to be demiclosed at zero.
\end{definition}

\begin{definition}
For any $ x, y \in H, z \in \operatorname{Fix}(\mathcal{A})$, operator $\mathcal{A}: H \rightarrow H$ is said to be:   
\begin{itemize}
\item \emph{$L$-Lipschitz continuous} with $L>0$ if
\[
\|\mathcal{A} x-\mathcal{A} y\| \leq L\|x-y\|\,.
\]
\item \emph{monotone} if
\[
\langle \mathcal{A} x-\mathcal{A} y, x-y\rangle \geq 0 \,.
\]
\item \emph{quasi-nonexpansive} if
\[
\|\mathcal{A} x-z\| \leq\|x-z\|\,.
\]
\item \emph{$\lambda$-strictly pseudocontractive} with $0 \leq \lambda<1$ if
\[
\|\mathcal{A} x-\mathcal{A} y\|^{2} \leq\|x-y\|^{2}+\lambda\|(I-\mathcal{A}) x-(I-\mathcal{A}) y\|^{2}\,.
\]
\item \emph{$\eta$-demicontractive} with $0 \leq \eta<1$ if
\begin{equation}\label{eq21}
\|\mathcal{A} x-z\|^{2} \leq\|x-z\|^{2}+\eta\|(I-\mathcal{A}) x\|^{2}\,,
\end{equation}
or equivalently
\begin{equation}\label{eq22}
\langle \mathcal{A} x-x, x-z\rangle \leq \frac{\eta-1}{2}\|x-\mathcal{A} x\|^{2}\,,
\end{equation}
or equivalently
\begin{equation}\label{eq23}
\langle \mathcal{A} x-z, x-z\rangle \leq\|x-z\|^{2}+\frac{\eta-1}{2}\|x-\mathcal{A} x\|^{2}\,.
\end{equation}
\end{itemize}
\end{definition}

\begin{remark}
According to the above definitions, we can easily see the following facts:
\begin{itemize}
\item Every strictly pseudocontractive mapping with a nonempty fixed point set is demicontractive.
\item The type of demicontractive mappings includes the type of quasi-nonexpansive mappings.
\end{itemize}
\end{remark}

The following lemmas are crucial in the proof of convergence of the algorithms.
\begin{lemma}[\cite{KS}]\label{lem21}
Suppose that $A : H \rightarrow H$ is a monotone and $ L $-Lipschitz continuous mapping. Let $T=P_{C}(I-\phi A)$, where $\phi>0$. If $\{x^{k}\}\subset H$ satisfying $x^{k} \rightharpoonup u$ and $x^{k}-T x^{k} \rightarrow 0$, then $u \in \mathrm{VI}(C, A)=\operatorname{Fix}(T)$.
\end{lemma}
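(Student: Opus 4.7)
The plan is to split the lemma into two parts: first the set equality $\mathrm{VI}(C,A)=\operatorname{Fix}(T)$, then the demiclosedness-type claim itself. The equality is standard from the projection characterization $p\in\mathrm{VI}(C,A)\iff p=P_C(p-\phi A p)$ for any $\phi>0$, obtained directly from the inequality $\langle x-P_Cx,y-P_Cx\rangle\le0$ listed among the basic properties of $P_C$. So I would state this briefly and move on.

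For the main assertion, I would first argue $u\in C$: since $Tx^k\in C$ and $x^k-Tx^k\to0$ while $x^k\rightharpoonup u$, we get $Tx^k\rightharpoonup u$, and $C$ is weakly closed. Next, using the characterization $Tx^k=P_C(x^k-\phi Ax^k)$ together with the variational inequality for the projection,
\begin{equation*}
\langle x^k-\phi Ax^k-Tx^k,\; x-Tx^k\rangle \le 0, \qquad \forall x\in C,
\end{equation*}
which rearranges to
\begin{equation*}
\phi\langle Ax^k,\; x-Tx^k\rangle \;\ge\; \langle x^k-Tx^k,\; x-Tx^k\rangle.
\end{equation*}
Then I would inject monotonicity of $A$: $\langle Ax,x-x^k\rangle\ge\langle Ax^k,x-x^k\rangle=\langle Ax^k,x-Tx^k\rangle+\langle Ax^k,Tx^k-x^k\rangle$. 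Combined with the previous display,
\begin{equation*}
\langle Ax,\; x-x^k\rangle \;\ge\; \tfrac{1}{\phi}\langle x^k-Tx^k,\; x-Tx^k\rangle + \langle Ax^k,\; Tx^k-x^k\rangle.
\end{equation*}

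Now I would pass to the limit $k\to\infty$. The left side tends to $\langle Ax,x-u\rangle$ by weak convergence. The first term on the right tends to $0$ because $x^k-Tx^k\to0$ strongly and $\{x-Tx^k\}$ is bounded. For the second term, $L$-Lipschitz continuity gives $\|Ax^k\|\le\|Au\|+L\|x^k-u\|$, and since $\{x^k\}$ is weakly convergent hence bounded, $\{Ax^k\}$ is bounded; pairing with $Tx^k-x^k\to0$ yields $0$ in the limit. Thus $\langle Ax,x-u\rangle\ge0$ for every $x\in C$.

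The final step is a Minty-type passage: from $\langle Ax,x-u\rangle\ge0$ for all $x\in C$ to $\langle Au,x-u\rangle\ge0$ for all $x\in C$. I would do this in the standard way by applying the inequality at $x_t=u+t(x-u)\in C$ for $t\in(0,1]$, dividing by $t$, and using the continuity of $A$ as $t\downarrow 0$. This gives $u\in\mathrm{VI}(C,A)$ and completes the proof. I do not anticipate a hard obstacle; the only step that demands care is keeping track of which terms converge strongly versus weakly so that inner products converge — the Lipschitz bound on $\{Ax^k\}$ is the key ingredient that makes the cross term $\langle Ax^k,Tx^k-x^k\rangle$ vanish, and this is where the hypothesis $L$-Lipschitz (rather than mere continuity) is essentially used.
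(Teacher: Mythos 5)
Your argument is correct and is essentially the standard proof of this result (the paper itself does not reprove the lemma but cites Kraikaew--Saejung, whose argument is the same: project, inject monotonicity, pass to the limit using boundedness of $\{Ax^k\}$, and finish with the Minty trick). No gaps: you correctly establish $u\in C$ before invoking the line segment $x_t=u+t(x-u)$, and you track which factors converge strongly so that each inner product has a strongly convergent factor paired with a bounded one.
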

\begin{lemma}[\cite{Mainge2}]\label{lem22}
Suppose that $\{b^{k}\}$ is a nonnegative sequence. If there exists a subsequence $\{b^{k_{j}}\}$ of $\left\{b^{k}\right\}$ satisfies $b^{k_{j}}<b^{k_{j}+1},\forall j \in \mathbb{N}$. Then, there exists a nondecreasing sequence $\left\{m_{k}\right\}$ of $\mathbb{N}$ satisfies $\lim _{k \rightarrow \infty} m_{k}=\infty$. Moreover, for all (sufficiently large) $k \in \mathbb{N}$, the following inequalities are satisfied:
$b^{m_{k}} \leq b^{m_{k}+1}$ and $b^{k} \leq b^{m_{k}+1}$. Actually, $m_{k}$ is the largest number $n$ in the set $\{1,2, \ldots, k\}$ satisfies $b^{n}<b^{n+1}$.
\end{lemma}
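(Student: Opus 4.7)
The plan is to take the explicit construction suggested by the final sentence of the statement itself: for each $k$ with $k \geq k_1$, set $m_k := \max\{n \in \{1,2,\dots,k\} : b^n < b^{n+1}\}$. First I would verify that this definition is meaningful: the hypothesis provides indices $k_1 < k_2 < \cdots$ with $b^{k_j} < b^{k_j+1}$, so as soon as $k \geq k_1$ the candidate set contains $k_1$ and is therefore nonempty, making the maximum well-defined.

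Next I would establish the structural properties of $\{m_k\}$. Monotonicity ($m_{k+1} \geq m_k$) is immediate because enlarging $\{1,\dots,k\}$ to $\{1,\dots,k+1\}$ can only enlarge the set over which the maximum is taken. For $m_k \to \infty$, given any $N \in \mathbb{N}$, pick $j$ with $k_j > N$; then for every $k \geq k_j$ the index $k_j$ lies in the candidate set, so $m_k \geq k_j > N$. The first inequality, $b^{m_k} \leq b^{m_k+1}$, is a direct consequence of the definition, since $m_k$ itself is selected to satisfy $b^{m_k} < b^{m_k+1}$.

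The somewhat more substantive inequality is $b^k \leq b^{m_k+1}$. I would derive it by exploiting the maximality of $m_k$: for every integer $n$ with $m_k < n \leq k$, we must have $b^n \geq b^{n+1}$, for otherwise $n$ would be a strictly larger member of the candidate set than $m_k$, contradicting its maximality. Chaining these relations yields $b^{m_k+1} \geq b^{m_k+2} \geq \cdots \geq b^k$, and therefore $b^k \leq b^{m_k+1}$. The degenerate case $m_k = k$ would be handled separately and is trivial, since the claim reduces to $b^{m_k} \leq b^{m_k+1}$, already established.

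The main obstacle, though quite mild, is precisely this last step: one must observe that the negation of ``$b^n \geq b^{n+1}$ for every $n \in \{m_k+1,\dots,k\}$'' contradicts the maximality of $m_k$, and then carefully handle the boundary case $m_k = k$ instead of appealing to an empty telescoping chain. Beyond that single contrapositive argument, every claim in the statement follows mechanically from the construction.
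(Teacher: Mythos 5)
Your proof is correct. The paper does not prove this lemma itself---it is quoted from Maing\'e's paper \cite{Mainge2}---and your argument (define $m_k$ as the largest $n\le k$ with $b^{n}<b^{n+1}$, check well-definedness for $k\ge k_1$, monotonicity, divergence via the subsequence $\{k_j\}$, and obtain $b^{k}\le b^{m_k+1}$ by chaining $b^{n}\ge b^{n+1}$ for $m_k<n\le k$ from maximality) is precisely the standard proof given in that reference, including the correct handling of the boundary case $m_k=k$.
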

\begin{lemma}[\cite{xu}]\label{lem23}
Suppose that $\{a^{k}\}$ is a  nonnegative sequence  satisfying
$a^{k+1} \leq \theta_{k} b^{k}+(1-\theta_{k}) a^{k},\forall k>0$, where $\left\{\theta_{k}\right\} \subset(0,1)$ and $\{b^{k}\}$ is a sequence such that $\sum_{k=0}^{\infty} \theta_{k}=\infty$ and $\limsup_{k \rightarrow \infty} b^{k} \leq 0$. Then, $\lim _{k \rightarrow \infty} a^{k}=0$.
\end{lemma}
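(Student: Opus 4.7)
The plan is to proceed by a standard $\varepsilon$--approximation argument, exploiting the hypothesis $\limsup_{k\to\infty} b^{k}\le 0$ to turn the linear recurrence into a contraction in the tail. First, given an arbitrary $\varepsilon>0$, I would choose an index $N$ such that $b^{k}\le\varepsilon$ for all $k\ge N$, which is possible precisely because $\limsup_{k\to\infty} b^{k}\le 0$. Inserting this bound into the recurrence gives, for every $k\ge N$,
\[
a^{k+1}-\varepsilon \;\le\; \theta_{k}\varepsilon+(1-\theta_{k})a^{k}-\varepsilon \;=\; (1-\theta_{k})\bigl(a^{k}-\varepsilon\bigr).
\]

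The next step is to remove the sign ambiguity that would block a naive iteration whenever $a^{k}<\varepsilon$. I would introduce the auxiliary nonnegative sequence $c^{k}:=\max\{a^{k}-\varepsilon,\,0\}$. Since $a^{k}-\varepsilon\le c^{k}$ and $1-\theta_{k}\in(0,1)$, the displayed inequality above yields $c^{k+1}\le(1-\theta_{k})c^{k}$ for $k\ge N$. Iterating from $N$ to $k$ produces the clean bound
\[
0\;\le\;c^{k+1}\;\le\;c^{N}\prod_{i=N}^{k}(1-\theta_{i}).
\]
Using the elementary estimate $1-x\le e^{-x}$ for $x\in(0,1)$ together with $\sum_{k=0}^{\infty}\theta_{k}=\infty$ forces the product on the right to tend to $0$. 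Consequently $c^{k}\to 0$, which means $\limsup_{k\to\infty}a^{k}\le\varepsilon$.

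Finally, because $\varepsilon>0$ was arbitrary and $a^{k}\ge 0$ by hypothesis, letting $\varepsilon\downarrow 0$ gives $\limsup_{k\to\infty}a^{k}\le 0\le\liminf_{k\to\infty}a^{k}$, so $\lim_{k\to\infty}a^{k}=0$, as claimed. A minor preliminary step, inserted before the main argument if needed, is to observe that $\{a^{k}\}$ is automatically bounded: since $\limsup b^{k}\le 0$ makes $\{b^{k}\}$ bounded above by some constant $B$, an induction on the recurrence yields $a^{k}\le\max\{a^{0},B\}$ for every $k$, which legitimises passing to $\limsup$.

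The main obstacle I anticipate is exactly the sign issue noted above. The inequality $a^{k+1}-\varepsilon\le(1-\theta_{k})(a^{k}-\varepsilon)$ looks telescopic, but when $a^{k}<\varepsilon$ the right-hand side is negative and multiplication by $(1-\theta_{k})\in(0,1)$ \emph{increases} it toward zero, so a direct iteration of signed quantities gives no useful upper bound. The truncation trick $c^{k}:=\max\{a^{k}-\varepsilon,0\}$ is the technical device that resolves this; once it is in place, the remaining analysis is a routine application of the fact that $\sum\theta_{k}=\infty$ is equivalent to $\prod(1-\theta_{k})=0$ for $\theta_{k}\in(0,1)$.
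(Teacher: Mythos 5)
Your proof is correct. The paper does not prove this lemma at all --- it is imported verbatim from the cited reference of Xu --- and your argument is essentially the standard one given there: fix $\varepsilon>0$, pass to the tail where $b^{k}\le\varepsilon$, and use $\sum\theta_{k}=\infty\iff\prod(1-\theta_{k})=0$. Your truncation $c^{k}=\max\{a^{k}-\varepsilon,0\}$ is a clean (and valid) way to handle the sign issue; the classical writeup instead unrolls the affine recursion into the convex-combination bound $a^{k+1}\le\bigl(\prod_{i=N}^{k}(1-\theta_{i})\bigr)a^{N}+\bigl(1-\prod_{i=N}^{k}(1-\theta_{i})\bigr)\varepsilon$, which sidesteps the same difficulty --- the two are interchangeable.
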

\begin{lemma}\label{lem24}
Suppose that $T : H \rightarrow H$ is $ \eta $-demicontractive with $\operatorname{Fix}(T) \neq \emptyset$. Set $T_{\lambda}=\lambda T+(1-\lambda) I$, where I stands for identity mapping and $ \lambda \in(0,1-\eta)$. Then:
\begin{enumerate}[label=(\roman*)]
\item $\operatorname{Fix}(T)=\operatorname{Fix}(T_{\lambda})$;
\item $\|T_{\lambda} x-u\|^{2} \leq\|x-u\|^{2}-\frac{1}{\lambda}(1-\eta-\lambda)\|(I-T_{\lambda}) x\|^{2},\quad  \forall u \in \operatorname{Fix}(T), x \in H$;
\item $\operatorname{Fix}(T)$ is a convex and closed set.
\end{enumerate}
\end{lemma}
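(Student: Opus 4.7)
The plan is to dispatch the three items in the natural order (i), (ii), (iii), with (ii) doing the heavy lifting and then feeding into (iii).

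For (i) I would simply observe that $T_{\lambda}u = u$ is equivalent to $\lambda Tu + (1-\lambda)u = u$, and since $\lambda \neq 0$ this rearranges to $Tu = u$. So $\operatorname{Fix}(T) = \operatorname{Fix}(T_{\lambda})$ is immediate and requires nothing about demicontractivity.

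For (ii), the idea is to write $T_{\lambda}x - u = \lambda(Tx - u) + (1-\lambda)(x - u)$ and apply the second basic identity stated in Section~\ref{sec2}, namely
\[
\|\theta a + (1-\theta) b\|^2 = \theta\|a\|^2 + (1-\theta)\|b\|^2 - \theta(1-\theta)\|a - b\|^2,
\]
with $\theta = \lambda$, $a = Tx - u$, $b = x - u$. This produces
\[
\|T_{\lambda}x - u\|^2 = \lambda\|Tx - u\|^2 + (1-\lambda)\|x-u\|^2 - \lambda(1-\lambda)\|Tx - x\|^2.
\]
I would then invoke the demicontractive inequality \eqref{eq21} to upper-bound $\|Tx - u\|^2$ by $\|x-u\|^2 + \eta\|x - Tx\|^2$, collect terms, and obtain
\[
\|T_{\lambda}x - u\|^2 \le \|x-u\|^2 - \lambda(1 - \eta - \lambda)\|x - Tx\|^2.
\]
The last step is to rewrite $\|x - Tx\|^2$ in terms of $\|(I - T_{\lambda})x\|^2$: since $(I - T_{\lambda})x = \lambda(x - Tx)$, we have $\|x - Tx\|^2 = \lambda^{-2}\|(I - T_{\lambda})x\|^2$, which turns the coefficient $\lambda(1 - \eta - \lambda)$ into $\lambda^{-1}(1 - \eta - \lambda)$, giving exactly the claimed inequality.

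For (iii), I would leverage (i) and (ii). Because $\lambda \in (0, 1-\eta)$, the coefficient $\lambda^{-1}(1 - \eta - \lambda)$ is strictly positive, so (ii) implies $\|T_{\lambda}x - u\| \le \|x - u\|$ for every $u \in \operatorname{Fix}(T) = \operatorname{Fix}(T_{\lambda})$; that is, $T_{\lambda}$ is quasi-nonexpansive. Closedness then follows from a short limiting argument: if $\{u_n\} \subset \operatorname{Fix}(T_{\lambda})$ converges strongly to $u$, quasi-nonexpansiveness gives $\|T_{\lambda}u - u_n\| \le \|u - u_n\| \to 0$, whence $T_{\lambda}u = u$. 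For convexity I would take $u, v \in \operatorname{Fix}(T_{\lambda})$, $t \in [0,1]$, set $w = tu + (1-t)v$, and apply the well-known identity
\[
t\|u - z\|^2 + (1-t)\|v - z\|^2 = \|w - z\|^2 + t(1-t)\|u - v\|^2
\]
with $z = T_{\lambda}w$. Bounding each term on the left by quasi-nonexpansiveness, $\|u - T_{\lambda}w\| \le (1-t)\|u-v\|$ and $\|v - T_{\lambda}w\| \le t\|u-v\|$, the left side is at most $t(1-t)\|u-v\|^2$, forcing $\|w - T_{\lambda}w\|^2 \le 0$. Hence $w \in \operatorname{Fix}(T_{\lambda}) = \operatorname{Fix}(T)$.

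The only place where any care is needed is (ii), specifically the bookkeeping to correctly transform $\|x - Tx\|^2$ into $\|(I - T_{\lambda})x\|^2$ and to produce the exact constant $\lambda^{-1}(1 - \eta - \lambda)$; everything else is either a one-line equivalence (i) or a standard consequence of quasi-nonexpansiveness (iii).
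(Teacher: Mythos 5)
Your proof is correct, and for the substantive part (ii) it is essentially the computation the paper performs, just dressed differently: the paper writes $T_{\lambda}x-u=(x-u)+\lambda(Tx-x)$, expands the square, and bounds the cross term with the inner-product form \eqref{eq22} of demicontractivity, whereas you use the convex-combination identity together with the squared-norm form \eqref{eq21}; since \eqref{eq21} and \eqref{eq22} are stated as equivalent, the two derivations produce the identical bound $\|x-u\|^{2}-\lambda(1-\eta-\lambda)\|x-Tx\|^{2}$ and the same final constant after substituting $(I-T_{\lambda})x=\lambda(x-Tx)$. The only genuine divergence is in (iii): the paper does not prove it at all but cites Proposition~1 of Yamada--Ogura, while you derive it from scratch by first noting that (ii) makes $T_{\lambda}$ quasi-nonexpansive, then getting closedness from a limit argument and convexity from the identity $\|w-z\|^{2}=t\|u-z\|^{2}+(1-t)\|v-z\|^{2}-t(1-t)\|u-v\|^{2}$ with $z=T_{\lambda}w$; your estimates $\|T_{\lambda}w-u\|\leq(1-t)\|u-v\|$ and $\|T_{\lambda}w-v\|\leq t\|u-v\|$ do force $\|w-T_{\lambda}w\|=0$, so the argument is sound. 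Your version is self-contained where the paper defers to a reference, at the cost of a few extra lines; both are acceptable.
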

\begin{proof}
\begin{enumerate}[label=(\roman*)]
\item It is obvious.

\item According to the definition of $ T_{\lambda} $ and \eqref{eq22}, we obtain
\[
\begin{aligned}
\|T_{\lambda} x-u\|^{2}  
&=\|x-u\|^{2}+2 \lambda\langle x-u, T x-x\rangle+\lambda^{2}\|T x-x\|^{2} \\
& \leq\|x-u\|^{2}+\lambda(\eta-1)\|T x-x\|^{2}+\lambda^{2}\|T x-x\|^{2} \\
&=\|x-u\|^{2}-\frac{1}{\lambda}(1-\eta-\lambda)\|(I-T_{\lambda}) x\|^{2}\,.
\end{aligned}
\]
\item It is a consequence of \cite[Proposition 1]{YO}.
\end{enumerate}
\end{proof}
\section{Main results}\label{sec3}
In this section, we present four inertial extragradient approaches to solve variational inequality problems and fixed point problems, and analyze their convergence. These algorithms are inspired and driven by the subgradient extragradient method, the Tseng's extragradient method and the Mann-type method. In particular, we have added inertial term and new step size, which makes these algorithms have faster convergence speed and do not need to know the prior information of Lipschitz constant in advance. First, we assume that our proposed Algorithm~\ref{alg1} and Algorithm~\ref{alg2} satisfy the subsequent four assumptions.
\begin{enumerate}[label=(C\arabic*)]
\item The mapping $A: H \rightarrow H$ is monotone and $L$-Lipschitz continuous on $H$. \label{con1}
\item The mapping $T: H \rightarrow H$ is  $\lambda$-demicontractive such that $(I-T)$ is demiclosed at zero. \label{con2}
\item The solution set $\operatorname{Fix}(T) \cap \mathrm{VI}(C, A) \neq \emptyset $. \label{con3}
\item  $ \{\zeta_{k}\} $ is a sequence of positive numbers and satisfies $\lim_{k \rightarrow \infty} \frac{\zeta_{k}}{\theta_{k}}=0$, where $ \{\theta_{k}\} $ is a sequence of $ (0,1) $, and satisfies $\sum_{n=1}^{\infty} \theta_{k}=\infty$ and $\lim _{k \rightarrow \infty} \theta_{k}=0$. Let the positive sequence $\left\{\eta_{k}\right\}$ satisfy $\eta_{k} \in(a, b) \subset(0,(1-\lambda)(1-\theta_{k}))$, for some $a>0$, $b>0$. \label{con4}
\end{enumerate}

\subsection{The inertial Mann-type subgradient extragradient algorithm}
Now, we present an inertial Mann-type subgradient extragradient algorithm to solve variational inequality problems and fixed point problems. The details of the algorithm are described as follows:
\begin{algorithm}[h]
\caption{The inertial Mann-type subgradient extragradient algorithm}
\label{alg1}
\begin{algorithmic}
\STATE {\textbf{Initialization:} Give $ \delta>0 $, $\gamma_{1}>0$, $\phi \in(0,1)$. Let $x^{0},x^{1} \in H$ be two arbitrary initial points.}
\STATE \textbf{Iterative Steps}: Calculate the next iteration point $ x^{k+1} $ as follows:
\STATE \textbf{Step 1.} Given two previously known iteration points $x^{k-1}$ and $x^{k}(k \geq 1) $. Calculate
\[s^{k}=x^{k}+\delta_{k}(x^{k}-x^{k-1})\,,\]
where
\begin{equation}\label{alpha}
\delta_{k}=\left\{\begin{array}{ll}
\min \bigg\{\dfrac{\zeta_{k}}{\|x^{k}-x^{k-1}\|}, \delta\bigg\}, & \text { if } x^{k} \neq x^{k-1}; \\
\delta, & \text { otherwise}.
\end{array}\right.
\end{equation}
\STATE \textbf{Step 2.} Calculate  
\[y^{k}=P_{C}(s^{k}-\gamma_{k} A s^{k})\,,\]
\STATE \textbf{Step 3.} Calculate 
\[z^{k}=P_{H_{k}}(s^{k}-\gamma_{k} A y^{k})\,,\]
where $ H_{k}:=\left\{x \in H \mid\langle s^{k}-\gamma_{k} A s^{k}-y^{k}, x-y^{k}\rangle \leq 0\right\} $.
\STATE \textbf{Step 4.} Calculate
\[x^{k+1}=(1-\theta_{k}-\eta_{k}) z^{k}+\eta_{k} T z^{k}\,,\]
and update
\begin{equation}\label{lambda2}
\gamma_{k+1}=\left\{\begin{array}{ll}
\min \left\{\dfrac{\phi\|s^{k}-y^{k}\|}{\|A s^{k}-A y^{k}\|}, \gamma_{k}\right\}, & \text { if } A s^{k}-A y^{k} \neq 0; \\
\gamma_{k}, & \text { otherwise}.
\end{array}\right.
\end{equation}
\end{algorithmic}
\end{algorithm}
\begin{remark}\label{rem31}
It follows from \eqref{alpha} that
\[
\lim _{k \rightarrow \infty} \frac{\delta_{k}}{\theta_{k}}\|x^{k}-x^{k-1}\|=0\,.
\]
Indeed, for all $k \ge 0$, we get that $\delta_{k}\|x^{k}-x^{k-1}\| \leq \zeta_{k}$, which, together with $\lim _{k \rightarrow \infty} \frac{\zeta_{k}}{\theta_{k}}=~0$ implies that
\[
\lim _{k \rightarrow \infty} \frac{\delta_{k}}{\theta_{k}}\|x^{k}-x^{k-1}\| \leq \lim _{k \rightarrow \infty} \frac{\zeta_{k}}{\theta_{k}}=0\,.
\]
\end{remark}
The following two lemmas are very important for the convergence analysis of the algorithms.
\begin{lemma}\label{lem31}
The sequence $\left\{\gamma_{k}\right\}$ defined in \eqref{lambda2} is a nonincreasing and satisfies
\[
\lim _{k \rightarrow \infty} \gamma_{k}=\gamma \geq \min \Big\{\gamma_{1}, \frac{\phi}{L}\Big\}\,.
\]
\end{lemma}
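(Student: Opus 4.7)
The plan is to verify the two defining properties directly from the update rule \eqref{lambda2}, using only the monotone $L$-Lipschitz continuity of $A$ from assumption \ref{con1}.

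First I would establish monotonicity. By inspection of \eqref{lambda2}, in the nontrivial branch $\gamma_{k+1}$ is defined as a minimum that includes $\gamma_k$ itself, and in the trivial branch $\gamma_{k+1}=\gamma_k$; hence $\gamma_{k+1}\le \gamma_k$ for every $k$. So $\{\gamma_k\}$ is nonincreasing.

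Next I would produce the uniform lower bound $\min\{\gamma_1,\phi/L\}$ by induction on $k$. The base case $k=1$ is immediate. For the inductive step, the only way $\gamma_{k+1}$ can drop strictly below $\gamma_k$ is if $As^k\ne Ay^k$ and the ratio $\phi\|s^k-y^k\|/\|As^k-Ay^k\|$ is strictly smaller than $\gamma_k$; in that case I would apply the Lipschitz bound
\begin{equation*}
\|As^k-Ay^k\|\le L\,\|s^k-y^k\|,
\end{equation*}
to conclude
\begin{equation*}
\frac{\phi\,\|s^k-y^k\|}{\|As^k-Ay^k\|}\ge \frac{\phi}{L}.
\end{equation*}
(Note that $As^k\ne Ay^k$ forces $s^k\ne y^k$, so the left-hand side is well defined.) Combining, $\gamma_{k+1}\ge \min\{\gamma_k,\phi/L\}$, and the inductive hypothesis gives $\gamma_{k+1}\ge \min\{\gamma_1,\phi/L\}$.

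Finally, since $\{\gamma_k\}$ is nonincreasing and bounded below by the positive constant $\min\{\gamma_1,\phi/L\}$, the monotone convergence theorem yields the existence of $\gamma=\lim_{k\to\infty}\gamma_k$ with $\gamma\ge\min\{\gamma_1,\phi/L\}$. There is no real obstacle here; the only subtlety is being careful to handle the two cases of \eqref{lambda2} separately and to note that whenever the denominator $\|As^k-Ay^k\|$ appears, we are in the branch where it is nonzero, so the Lipschitz estimate applies cleanly.
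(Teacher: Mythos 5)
Your proposal is correct and follows essentially the same route as the paper: monotonicity is read off from the $\min$ in \eqref{lambda2}, the Lipschitz bound $\|As^k-Ay^k\|\le L\|s^k-y^k\|$ gives the ratio bound $\phi/L$, and monotone convergence finishes the argument. The only difference is that you make explicit the induction behind the inequality $\gamma_k\ge\min\{\gamma_1,\phi/L\}$, which the paper states directly; this is a harmless (indeed slightly tidier) presentation of the same proof.
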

\begin{proof}
It follows from \eqref{lambda2} that $\gamma_{k+1} \leq \gamma_{k}$ for all $n \in \mathbb{N} $. Hence, $\left\{\gamma_{k}\right\}$ is nonincreasing. On the other hand, we get that $\|A s^{k}-A y^{k}\| \leq L\|s^{k}-y^{k}\|$ since $A$ is $L$-Lipschitz continuous. Consequently
\[
\phi \frac{\|s^{k}-y^{k}\|}{\|A s^{k}-A y^{k}\|} \geq \frac{\phi}{L} \,\,,\text {  if  }\,\, A s^{k} \neq A y^{k}\,,
\]
which together with \eqref{lambda2} implies that $ \gamma_{k} \geq \min \{\gamma_{1}, \frac{\phi}{L}\} $. Therefore, from the sequence $ \{\gamma_{k}\} $  is nonincreasing and lower bounded, we have $\lim _{k \rightarrow \infty} \gamma_{k}=\gamma \geq \min \big\{\gamma_{1}, \frac{\phi}{L}\big\}$.\qed
\end{proof}

\begin{lemma}[\cite{tanarxiv}]\label{lem32}
Suppose that Conditions \ref{con1} and \ref{con3} hold. Let sequence $\{z^{k}\}$ be generated by Algorithm~\ref{alg1}. Then, for all $u \in \mathrm{VI}(C, A)$, we have, 
\begin{equation}\label{q}
\|z^{k}-u\|^{2} \leq\|s^{k}-u\|^{2}-\Big(1-\phi \frac{\gamma_{k}}{\gamma_{k+1}}\Big)\|y^{k}-s^{k}\|^{2}-\Big(1-\phi \frac{\gamma_{k}}{\gamma_{k+1}}\Big)\|z^{k}-y^{k}\|^{2}\,.
\end{equation}
\end{lemma}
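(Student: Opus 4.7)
The plan is to obtain the inequality from the standard subgradient extragradient machinery, with the adaptive step size rule in \eqref{lambda2} entering only at the very last step to replace the Lipschitz constant. First I would observe that $u \in \mathrm{VI}(C,A) \subset C \subset H_k$; the inclusion $C \subset H_k$ follows from the variational characterization of $y^k=P_C(s^k-\gamma_k A s^k)$, which says $\langle s^k - \gamma_k A s^k - y^k, x - y^k\rangle \le 0$ for every $x \in C$. This is what allows us to treat $u$ as a point of the half-space $H_k$ and apply the standard projection inequality $\|P_{H_k} w - u\|^2 \le \|w-u\|^2 - \|w - P_{H_k} w\|^2$ with $w=s^k - \gamma_k Ay^k$.

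Next I would expand the resulting inequality. Writing out the two squared norms and canceling the common term $\gamma_k^2 \|Ay^k\|^2$ gives
\[
\|z^k-u\|^2 \le \|s^k-u\|^2 - \|s^k-z^k\|^2 - 2\gamma_k\langle Ay^k, z^k-u\rangle.
\]
I would then split $\langle Ay^k, z^k-u\rangle = \langle Ay^k, y^k-u\rangle + \langle Ay^k, z^k-y^k\rangle$; the first summand is nonnegative because $y^k\in C$ and, by monotonicity of $A$ combined with $\langle Au, y^k-u\rangle \ge 0$, we get $\langle Ay^k, y^k - u\rangle \ge 0$. Discarding this nonpositive contribution and decomposing $\|s^k-z^k\|^2 = \|s^k-y^k\|^2 + 2\langle s^k-y^k, y^k-z^k\rangle + \|y^k-z^k\|^2$, I would regroup everything into the form
\[
\|z^k-u\|^2 \le \|s^k-u\|^2 - \|s^k-y^k\|^2 - \|y^k-z^k\|^2 + 2\langle s^k - y^k - \gamma_k Ay^k, z^k - y^k\rangle.
\]

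The main (still modest) obstacle is handling the cross term. I would write $s^k - y^k - \gamma_k Ay^k = (s^k - \gamma_k A s^k - y^k) + \gamma_k(A s^k - A y^k)$. Since $z^k\in H_k$, the first piece contributes a nonpositive quantity by the very definition of $H_k$, and only $2\gamma_k\langle A s^k - A y^k, z^k - y^k\rangle$ survives. Here the adaptive rule \eqref{lambda2} enters: whether or not $A s^k = A y^k$, one has $\gamma_k\|A s^k - A y^k\| \le \phi \,\tfrac{\gamma_k}{\gamma_{k+1}}\|s^k - y^k\|$. Applying Cauchy--Schwarz and then $2ab \le a^2+b^2$ yields
\[
2\gamma_k\langle A s^k - A y^k, z^k - y^k\rangle \le \phi\,\tfrac{\gamma_k}{\gamma_{k+1}}\bigl(\|s^k-y^k\|^2 + \|z^k-y^k\|^2\bigr),
\]
and substituting this back gives precisely \eqref{q}. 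The only point that demands a little care is verifying the two degenerate cases of \eqref{lambda2} (when $A s^k = A y^k$, the cross term vanishes outright and the bound is trivial), so the argument goes through uniformly.
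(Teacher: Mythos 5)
Your proof is correct and is essentially the standard subgradient extragradient argument that the paper relies on (the lemma is quoted from \cite{tanarxiv} without a reprinted proof): projection inequality onto the half-space $H_k$ containing $u$, monotonicity to discard $\langle Ay^k, y^k-u\rangle$, the defining inequality of $H_k$ applied to $z^k$, and the step-size rule \eqref{lambda2} plus Cauchy--Schwarz and $2ab\le a^2+b^2$ to absorb the remaining cross term. All steps, including the degenerate case $As^k=Ay^k$, check out.
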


\begin{theorem}\label{thm31}
Suppose that Conditions \ref{con1}--\ref{con4} hold. Then the iterative sequence $\{x^{k}\}$ generated by Algorithm~\ref{alg1} converges to $u \in \operatorname { Fix }(T) \cap \mathrm{VI}(C, A)$  in norm, where $\|u\|=\min \{\|p\|: p \in \operatorname { Fix }(T) \cap \mathrm{VI}(C, A)\}$.
\end{theorem}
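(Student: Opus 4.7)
The plan is to adapt the standard Halpern/Mann strong-convergence scheme to this inertial subgradient-extragradient setting. Let $u:=P_{\operatorname{Fix}(T)\cap\mathrm{VI}(C,A)}0$ be the minimum-norm solution, and set $\beta_k:=\eta_k/(1-\theta_k)$, which lies in $(0,1-\lambda)$ by Condition~\ref{con4}. Rewriting the Mann step as the convex combination $x^{k+1}=\theta_k\cdot 0+(1-\theta_k)T_{\beta_k}z^k$ with $T_{\beta_k}:=\beta_k T+(1-\beta_k)I$, Lemma~\ref{lem24}(ii) supplies
\[\|T_{\beta_k}z^k-u\|^2\le\|z^k-u\|^2-\tfrac{1}{\beta_k}(1-\lambda-\beta_k)\|(I-T_{\beta_k})z^k\|^2,\]
and Lemma~\ref{lem32} controls $\|z^k-u\|^2$ by $\|s^k-u\|^2$ minus non-negative quadratic residues. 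These two estimates will be the workhorses.

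First I would establish boundedness. Combining the inertial bound $\|s^k-u\|\le\|x^k-u\|+\delta_k\|x^k-x^{k-1}\|$ with Lemma~\ref{lem32} (whose coefficient $1-\phi\gamma_k/\gamma_{k+1}$ is eventually positive since $\phi<1$ and $\gamma_k\to\gamma>0$ by Lemma~\ref{lem31}) and Lemma~\ref{lem24}(ii), then using Remark~\ref{rem31} to absorb $\delta_k\|x^k-x^{k-1}\|$ into a $\theta_k$-term, I would deduce $\|x^{k+1}-u\|\le(1-\theta_k)\|x^k-u\|+\theta_k M_k$ with $\{M_k\}$ bounded; an induction then yields boundedness of $\{x^k\}$, $\{s^k\}$, $\{y^k\}$ and $\{z^k\}$.

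Next I would derive two complementary squared-norm recursions. Applying the identity $\|a+b\|^2\le\|a\|^2+2\langle b,a+b\rangle$ to $x^{k+1}-u=(1-\theta_k)(T_{\beta_k}z^k-u)+\theta_k(-u)$, and chaining in Lemmas~\ref{lem24}(ii) and \ref{lem32} together with the inertial estimate, one obtains on the one hand a Xu-type recursion
\[\|x^{k+1}-u\|^2\le(1-\theta_k)\|x^k-u\|^2+\theta_k\Bigl[2\langle -u,x^{k+1}-u\rangle+\tfrac{\delta_k}{\theta_k}\|x^k-x^{k-1}\|\,C\Bigr],\]
and on the other hand a dissipative estimate
\[\|x^{k+1}-u\|^2\le\|x^k-u\|^2+\theta_k C'-c_k\Bigl[\|y^k-s^k\|^2+\|z^k-y^k\|^2+\|(I-T_{\beta_k})z^k\|^2\Bigr],\]
with $c_k$ bounded below by a positive constant eventually.

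The final step is the two-case argument of Lemma~\ref{lem22} applied to $b^k:=\|x^k-u\|^2$. In the eventually-monotone case, the dissipative estimate forces $\|y^k-s^k\|\to 0$, $\|z^k-y^k\|\to 0$ and $\|(I-T_{\beta_k})z^k\|=\beta_k\|(I-T)z^k\|\to 0$; combined with $\|s^k-x^k\|=\delta_k\|x^k-x^{k-1}\|\to 0$ (Remark~\ref{rem31}) these imply $\|x^k-z^k\|\to 0$ and $\|s^k-P_C(s^k-\gamma As^k)\|\to 0$ via $\gamma_k\to\gamma$ and Lipschitz continuity of $A$. For any weak cluster point $p$ of $\{x^{k_j}\}$, Lemma~\ref{lem21} then gives $p\in\mathrm{VI}(C,A)$, while demiclosedness of $I-T$ at zero gives $p\in\operatorname{Fix}(T)$; the projection characterization of $u$ produces $\limsup_k\langle -u,x^{k+1}-u\rangle\le 0$, and Lemma~\ref{lem23} applied to the Xu-type recursion delivers $\|x^k-u\|\to 0$. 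In the non-monotone case, Lemma~\ref{lem22} supplies indices $\{m_k\}$ along which the same dissipation argument still forces the quadratic residues to vanish, and the Xu-type inequality along $\{m_k\}$ drives $\|x^{m_k+1}-u\|^2\to 0$, whence $\|x^k-u\|^2\to 0$. The main obstacle I anticipate is the bookkeeping that couples the varying step size to the inertial perturbation: one has to verify that $1-\phi\gamma_k/\gamma_{k+1}$ stays uniformly positive in the limit and that $\delta_k\|x^k-x^{k-1}\|$ really can be pushed into the $\theta_k$-term, so that both the Xu-type and the dissipative estimates close and the two-case analysis of Maingé proceeds cleanly without additional hypotheses.
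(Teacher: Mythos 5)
Your proposal is correct and follows essentially the same route as the paper's proof: boundedness via absorbing the inertial term $\delta_k\|x^k-x^{k-1}\|$ into a $\theta_k$-term, a dissipative estimate and a Xu-type recursion built from Lemma~\ref{lem32} and the demicontractivity of $T$, and the Maing\'e two-case argument (Lemmas~\ref{lem22} and~\ref{lem23}) combined with Lemma~\ref{lem21} and demiclosedness to identify the weak cluster points and pass to strong convergence. The only (harmless) difference is packaging: you write the Mann step exactly as $x^{k+1}=\theta_k\cdot 0+(1-\theta_k)T_{\beta_k}z^k$ with $\beta_k=\eta_k/(1-\theta_k)$ and invoke Lemma~\ref{lem24}(ii), which yields a slightly cleaner Xu-type recursion, whereas the paper anchors at $z^k$ and expands the demicontractivity inequalities directly, carrying an extra vanishing term $2\eta_k\|z^k-Tz^k\|\,\|x^{k+1}-u\|$.
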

\begin{proof}
It follows from Lemma~\ref{lem24} that  $ \operatorname{Fix}(T) $ is a  convex and closed set. Note that $\mathrm{VI}(C, A)$ is also a closed and convex set. According to the definition of $ u $, we have $u=P_{\mathrm{VI}(C, A) \cap \operatorname { Fix }(T)} 0 $. By Lemma~\ref{lem31}, we get $ \lim _{k \rightarrow \infty}(1-\phi \frac{\gamma_{k}}{\gamma_{k+1}})=1-\phi>0 $, which means that there exists $k_{0} \in \mathbb{N}$ such that $(1-\phi \frac{\gamma_{k}}{\gamma_{k+1}})>0, \, \forall k \geq k_{0} $. On account of Lemma~\ref{lem31} and Lemma~\ref{lem32}, we deduce that
\begin{equation}\label{eqq}
\|z^{k}-u\| \leq\|s^{k}-u\|,\quad \forall k \geq k_{0}\,.
\end{equation}

\noindent\textbf{Claim 1.} The sequence $\{x^{k}\}$ is bounded.   According to the definition of $ \{x^{k+1}\} $, one has
\begin{equation}\label{eqa}
\begin{aligned}
\|x^{k+1}-u\| &=\|(1-\theta_{k}-\eta_{k})(z^{k}-u)+\eta_{k}(T z^{k}-u)-\theta_{k} u\| \\
& \leq\|(1-\theta_{k}-\eta_{k})(z^{k}-u)+\eta_{k}(T z^{k}-u)\|+\theta_{k}\|u\|\,.
\end{aligned}
\end{equation}
Combining \eqref{eq21}, \eqref{eq23} and \eqref{eqq}, we have
\[
\begin{aligned}
&\quad\|(1-\theta_{k}-\eta_{k})(z^{k}-u)+\eta_{k}(T z^{k}-u)\|^{2} \\
=&(1-\theta_{k}-\eta_{k})^{2}\|z^{k}-u\|^{2}+\eta_{k}^{2}\|T z^{k}-u\|^{2}+2(1-\theta_{k}-\eta_{k}) \eta_{k}\langle T z^{k}-u, z^{k}-u\rangle \\
\leq&(1-\theta_{k}-\eta_{k})^{2}\|z^{k}-u\|^{2}+\eta_{k}^{2}\big[\|z^{k}-u\|^{2}+\lambda\|z^{k}-T z^{k}\|^{2}\big]  \\
&+2(1-\theta_{k}-\eta_{k}) \eta_{k}\big[\|z^{k}-u\|^{2}-\frac{1-\lambda}{2}\|z^{k}-T z^{k}\|^{2}\big] \\
=&(1-\theta_{k})^{2}\|z^{k}-u\|^{2}+\eta_{k}(\eta_{k}-(1-\lambda)(1-\theta_{k}))\|z^{k}-T z^{k}\|^{2} \\
\leq& (1-\theta_{k})^{2}\|s^{k}-u\|^{2}\,,
\end{aligned}
\]
which implies that
\begin{equation}\label{eqz}
\|(1-\theta_{k}-\eta_{k})(z^{k}-u)+\eta_{k}(T z^{k}-u)\| \leq(1-\theta_{k})\|s^{k}-u\|\,.
\end{equation}
From the definition of $ \{s^{k}\} $, we can write
\begin{equation}\label{c}
\begin{aligned}
\|s^{k}-u\| 
&\leq\|x^{k}-u\|+\theta_{k} \cdot \frac{\delta_{k}}{\theta_{k}}\|x^{k}-x^{k-1}\|\,.
\end{aligned}
\end{equation}
By Remark~\ref{rem31}, we get that $\frac{\delta_{k}}{\theta_{k}}\|x^{k}-x^{k-1}\| \rightarrow 0$. Thus, there exists a constant $Q_{1}>0$ such that
\begin{equation}\label{r}
\frac{\delta_{k}}{\theta_{k}}\|x^{k}-x^{k-1}\| \leq Q_{1}, \quad \forall k \geq 1\,.
\end{equation}
From \eqref{eqq}, \eqref{c} and \eqref{r}, we find that
\begin{equation}\label{f}
\|z^{k}-u\| \leq\|s^{k}-u\| \leq\|x^{k}-u\|+\theta_{k} Q_{1},\quad \forall k \geq k_{0}\,.
\end{equation}
Combining \eqref{eqa}, \eqref{eqz} and \eqref{f},  we obtain
\[
\begin{aligned}
\|x^{k+1}-u\| & \leq(1-\theta_{k})\|s^{k}-u\|+\theta_{k}\|u\| \\
&\leq(1-\theta_{k})\|x^{k}-u\|+\theta_{k}(\|u\|+Q_{1})\\
& \leq \max \big\{\|x^{k}-u\|,\|u\|+Q_{1}\big\} \\
& \leq \cdots \leq \max \left\{\|x^{0}-u\|,\|u\|+Q_{1}\right\}\,.
\end{aligned}
\]
Thus, the sequence $\{x^{k}\}$ is bounded. So the sequences $ \{s^{k}\} $ and $\{z^{k}\}$ are also bounded.

\noindent\textbf{Claim 2.}
\[
\begin{aligned}
&\quad\eta_{k}\left[(1-\lambda)-\eta_{k}\right]\|z^{k}-T z^{k}\|^{2}+\Big(1-\phi \frac{\gamma_{k}}{\gamma_{k+1}}\Big)\|y^{k}-s^{k}\|^{2}+\Big(1-\phi \frac{\gamma_{k}}{\gamma_{k+1}}\Big)\|z^{k}-y^{k}\|^{2}\\ &\leq\|x^{k}-u\|^{2}-\|x^{k+1}-u\|^{2}+\theta_{k} Q_{4}\,.
\end{aligned}
\]
Indeed, it follows from \eqref{f} that
\begin{equation}\label{eqo}
\begin{aligned}
\|s^{k}-u\|^{2} & \leq(\|x^{k}-u\|+\theta_{k} Q_{1})^{2} \\
&=\|x^{k}-u\|^{2}+\theta_{k}(2 Q_{1}\|x^{k}-u\|+\theta_{k} Q_{1}^{2}) \\
&  \leq\|x^{k}-u\|^{2}+\theta_{k} Q_{2}
\end{aligned}
\end{equation}
for some $Q_{2}>0 $. Using \eqref{eq22}, \eqref{f},  \eqref{eqo}  and Lemma~\ref{lem32}, we obtain
\begin{equation*}
\begin{aligned}
\|x^{k+1}-u\|^{2} =&\|(z^{k}-u)+\eta_{k}(T z^{k}-z^{k})-\theta_{k} z^{k}\|^{2} \\
\leq&\|(z^{k}-u)+\eta_{k}(T z^{k}-z^{k})\|^{2}-2 \theta_{k}\langle z^{k}, x^{k+1}-u\rangle \\
=&\|z^{k}-u\|^{2}+\eta_{k}^{2}\|T z^{k}-z^{k}\|^{2}+2 \eta_{k}\langle T z^{k}-z^{k}, z^{k}-u\rangle+2 \theta_{k}\langle z^{k}, u-x^{k+1}\rangle\\
\leq&\|z^{k}-u\|^{2}+\eta_{k}^{2}\|T z^{k}-z^{k}\|^{2}+\eta_{k}(\lambda-1)\|z^{k}-T z^{k}\|^{2}+\theta_{k} Q_{3} \\
\leq&\|x^{k}-u\|^{2}+\theta_{k} Q_{4} -\eta_{k}\left[(1-\lambda)-\eta_{k}\right]\|z^{k}-T z^{k}\|^{2}\\
&-\Big(1-\phi \frac{\gamma_{k}}{\gamma_{k+1}}\Big)\|y^{k}-s^{k}\|^{2} -\Big(1-\phi \frac{\gamma_{k}}{\gamma_{k+1}}\Big)\|z^{k}-y^{k}\|^{2}\,,
\end{aligned}
\end{equation*}
where $ Q_{4}=Q_{2}+Q_{3} $. Thus, we can obtain the desired result through a direct calculation.

\noindent\textbf{Claim 3.}
\begin{equation*}
\begin{aligned}
\|x^{k+1}-u\|^{2}\leq &(1-\theta_{k})\|x^{k}-u\|^{2} +\theta_{k}\big[2\eta_{k}\|z^{k}-T z^{k}\|\|x^{k+1}-u\|+2\langle u, u-x^{k+1}\rangle\Big.\\
&\Big.+\frac{3 M \delta_{k}}{ \theta_{k}}\|x^{k}-x^{k-1}\|\big], \quad  \forall k \geq k_{0}\,.
\end{aligned}
\end{equation*}
Indeed, setting $t^{k}=(1-\eta_{k}) z^{k}+\eta_{k} T z^{k}$, and using \eqref{eq21} and \eqref{eq23}, we obtain
\begin{equation}
\begin{aligned}
\|t^{k}-u\|^{2} =&\|(1-\eta_{k})(z^{k}-u)+\eta_{k}(T z^{k}-u)\|^{2} \\
=&(1-\eta_{k})^{2}\|z^{k}-u\|^{2}+\eta_{k}^{2}\|T z^{k}-u\|^{2}+2(1-\eta_{k}) \eta_{k}\langle T z^{k}-u, z^{k}-u\rangle\\ \leq&(1-\eta_{k})^{2}\|z^{k}-u\|^{2}+\eta_{k}^{2}\|z^{k}-u\|^{2}+\eta_{k}^{2} \lambda\|T z^{k}-z^{k}\|^{2} \\
&+2(1-\eta_{k}) \eta_{k}\big[\|z^{k}-u\|^{2}-\frac{1-\lambda}{2}\|T z^{k}-z^{k}\|^{2}\big] \\
=&\|z^{k}-u\|^{2}+\eta_{k}\left[\eta_{k}-(1-\lambda)\right]\|T z^{k}-z^{k}\|^{2}\,.
\end{aligned}
\end{equation}
In view of $ \{\eta_{k}\}\subset(0,1-\lambda) $ and  \eqref{f}, we get
\begin{equation}\label{eqw}
\|t^{k}-u\| \leq\|s^{k}-u\|,\quad \forall k \geq k_{0}\,.
\end{equation}
According to the definition of $ \{s^{k}\} $, one obtains
\begin{equation}\label{eqk}
\begin{aligned}
\|s^{k}-u\|^{2} 
&=\|x^{k}-u\|^{2}+2 \delta_{k}\langle x^{k}-u, x^{k}-x^{k-1}\rangle+\delta_{k}^{2}\|x^{k}-x^{k-1}\|^{2} \\
&\leq\|x^{k}-u\|^{2} + 3M\delta_{k}\|x^{k}-x^{k-1}\|\,,
\end{aligned}
\end{equation}
where $M:=\sup _{n \in \mathbb{N}}\left\{\|x^{k}-u\|, \delta\|x^{k}-x^{k-1}\|\right\}>0$. Moreover, one sees that
\[
\begin{aligned}
x^{k+1} &=t^{k}-\theta_{k} z^{k} \\
&=(1-\theta_{k}) t^{k}-\theta_{k}(z^{k}-t^{k}) \\
&=(1-\theta_{k}) t^{k}-\theta_{k} \eta_{k}(z^{k}-T z^{k})\,.
\end{aligned}
\]
From \eqref{eqw} and \eqref{eqk}, we obtain
\[
\begin{aligned}
\|x^{k+1}-u\|^{2}=&\|(1-\theta_{k})(t^{k}-u)-\theta_{k}( \eta_{k}(z^{k}-T z^{k})+ u)\|^{2} \\
\leq&(1-\theta_{k})^{2}\|t^{k}-u\|^{2}-2\theta_{k}\langle \eta_{k}(z^{k}-T z^{k})+ u, x^{k+1}-u\rangle \\
=&(1-\theta_{k})^{2}\|t^{k}-u\|^{2}+ \theta_{k} \big[2\eta_{k}\langle z^{k}-T z^{k}, u-x^{k+1}\rangle +2\langle u, u-x^{k+1}\rangle\big] \\
\leq &(1-\theta_{k})\|x^{k}-u\|^{2} +\theta_{k}\big[2\eta_{k}\|z^{k}-T z^{k}\|\|x^{k+1}-u\|+2\langle u, u-x^{k+1}\rangle\big.\\
&\big.+\frac{3 M \delta_{k}}{ \theta_{k}}\|x^{k}-x^{k-1}\|\big], \quad  \forall k \geq k_{0}\,.
\end{aligned}
\]
\noindent\textbf{Claim 4.} The sequence $\{\|x^{k}-u\|^{2}\}$ converges to zero. We regard to two reasonable situations on the sequence $\{\|x^{k}-u\|^{2}\}$.

{\it Case 1:} There exists an $N \in \mathbb{N}$ such that $\|x^{k+1}-u\|^{2} \leq\|x^{k}-u\|^{2}$ for all $k \geq N $. This implies that $\lim _{k \rightarrow \infty}\|x^{k}-u\|^{2}$ exists. Since $\lim _{k \rightarrow \infty}\big(1-\phi \frac{\gamma_{k}}{\gamma_{k+1}}\big)=1-\phi>0$ and  Claim 2, we obtain
\begin{equation}\label{v}
\lim _{k \rightarrow \infty}\|s^{k}-y^{k}\|=0, \text{ and }\lim _{k \rightarrow \infty}\|z^{k}-T z^{k}\|=0,\text{ and }
\lim _{k \rightarrow \infty}\|z^{k}-y^{k}\|=0\,,
\end{equation}
which implies that $ \lim _{k \rightarrow \infty}\|z^{k}-s^{k}\|=0 $.
According to the definition of $ \{s^{k}\} $, one has
\begin{equation}\label{eqr}
\|x^{k}-s^{k}\|=\delta_{k}\|x^{k}-x^{k-1}\|=\theta_{k} \cdot \frac{\delta_{k}}{\theta_{k}}\|x^{k}-x^{k-1}\| \rightarrow 0\,.
\end{equation}
This together with $ \lim _{k \rightarrow \infty}\|z^{k}-s^{k}\|=0 $ implies that
\begin{equation}\label{y}
\lim _{k \rightarrow \infty}\|z^{k}-x^{k}\|=0\,.
\end{equation}
Combining condition \ref{con4}, \eqref{v} and \eqref{y}, we have
\[
\|x^{k+1}-x^{k}\| \leq\|z^{k}-x^{k}\|+\theta_{k}\|z^{k}\|+\eta_{k}\|z^{k}-T z^{k}\| \rightarrow 0\,.
\]
We suppose that there exists a subsequence $\{x^{k_{j}}\}$ of $\{x^{k}\}$ such that $x^{k_{j}} \rightharpoonup q$, since $\{x^{k}\}$ is bounded. Hence, we get
\[
\limsup _{k \rightarrow \infty}\langle u, u-x^{k}\rangle=\lim _{j \rightarrow \infty}\langle u, u-x^{k_{j}}\rangle=\langle u, u-q\rangle\,.
\]
One sees that $s^{n_{j}} \rightharpoonup q$ because of \eqref{eqr}, which combining $\lim _{k \rightarrow \infty} \gamma_{k}=\gamma$ and  \eqref{v}, we concluded that $q \in \mathrm{VI}(C, A)$ by means of Lemma~\ref{lem21}. Furthermore,  we get that $z^{n_{j}} \rightharpoonup q$ from \eqref{y}, which combining $\lim _{k \rightarrow \infty}\|z^{k}-T z^{k}\|=0$ implies that $q \in \operatorname{Fix}(T)$. Thus, we have $q \in \mathrm{VI}(C, A) \cap \operatorname { Fix }(T)$. From $u=P_{\mathrm{VI}(C, A) \cap \operatorname { Fix }(T)} 0$, one infers that $ \limsup _{k \rightarrow \infty}\langle u, u-x^{k}\rangle=\langle u, u-q\rangle \leq 0 $. By $\|x^{k+1}-x^{k}\| \rightarrow 0$, we obtain
\[
\limsup _{k \rightarrow \infty}\langle u, u-x^{k+1}\rangle \leq 0\,.
\]
Thus, Combining Claim 3 and Lemma~\ref{lem23}, we deduce that $\lim _{k \rightarrow \infty}\|x^{k+1}-u\|^{2}=0$. This means that $x^{k}\rightarrow u$.

{\it Case 2:} There is a subsequence $\{\|x^{k_{j}}-u\|^{2}\}$ of $\{\|x^{k}-u\|^{2}\}$, which, for all $j \in \mathbb{N} $, satisfies $\|x^{k_{j}}-u\|^{2}<\|x^{k_{j}+1}-u\|^{2}$. In this situation, according to Lemma~\ref{lem22}, there is a nondecreasing sequence $\left\{m_{k}\right\}$ of $\mathbb{N}$ such that $\lim _{k \rightarrow \infty} m_{k}=\infty$, and the following conclusions hold for all $k \in \mathbb{N}$:
\begin{equation}\label{eqs}
\|x^{m_{k}}-u\|^{2} \leq\|x^{m_{k}+1}-u\|^{2}, \quad \text { and } \quad\|x^{k}-u\|^{2} \leq\|x^{m_{k}+1}-u\|^{2}\,.
\end{equation}
From Claim 2, we have
\[
\begin{aligned}
&\quad\Big(1-\phi \frac{\gamma_{m_{k}}}{\gamma_{m_{k}+1}}\Big)\|y^{m_{k}}-s^{m_{k}}\|^{2}+\Big(1-\phi \frac{\gamma_{m_{k}}}{\gamma_{m_{k}+1}}\Big)\|z^{m_{k}}-y^{m_{k}}\|^{2} \\
&\quad+\eta_{m_{k}}\left[(1-\lambda)-\eta_{m_{k}}\right]\|z^{m_{k}}-T z^{m_{k}}\|^{2}\\
&\leq\|x^{m_{k}}-u\|^{2}-\|x^{m_{k}+1}-u\|^{2}+\theta_{m_{k}} Q_{4} \leq \theta_{m_{k}} Q_{4}\,.
\end{aligned}
\]
From condition \ref{con4}, we obtain
\begin{equation*}
\lim _{k \rightarrow \infty}\|s^{m_{k}}-y^{m_{k}}\|=\lim _{k \rightarrow \infty}\|z^{m_{k}}-y^{m_{k}}\|=
\lim _{k \rightarrow \infty}\|z^{m_{k}}-T z^{m_{k}}\|=0.
\end{equation*}
As proved in the first situation, we get that $ \limsup _{k \rightarrow \infty}\langle u, u-x^{m_{k}+1}\rangle \leq 0 $. From Claim 3 and \eqref{eqs}, we obtain
\[
\begin{aligned}
\|x^{m_{k}+1}-u\|^{2}
\leq &(1-\theta_{m_{k}})\|x^{m_{k}+1}-u\|^{2}+\theta_{m_{k}}\big[ 2\eta_{m_{k}}\|z^{m_{k}}-T z^{m_{k}}\|\|x^{m_{k}+1}-u\|\Big.
\\&\Big.+2\langle u, u-x^{m_{k}+1}\rangle +\frac{3 M \delta_{m_{k}}}{ \theta_{m_{k}}}\|x^{m_{k}}-x^{m_{k}-1}\|\big]\,,
\end{aligned}
\]
which implies that
\[
\begin{aligned}
\|x_{k}-u\|^{2} \leq\|x^{m_{k}+1}-u\|^{2} \leq& 2 \eta_{m_{k}}\|z^{m_{k}}-T z^{m_{k}}\|\|x^{m_{k}+1}-u\|+2\langle u, u-x^{m_{k}+1}\rangle\\
&+\frac{3 M \delta_{m_{k}}}{ \theta_{m_{k}}}\|x^{m_{k}}-x^{m_{k}-1}\|\,.
\end{aligned}
\]
Thus, $\lim \sup _{k \rightarrow \infty}\|x^{k}-u\| \leq 0$, that is $x^{k} \rightarrow u $. We have thus proved the theorem. \qed
\end{proof}

\subsection{The inertial Mann-type Tseng's extragradient algorithm}
In this subsection, we will introduce a new iteration scheme combining inertial Tseng's extragradient algorithm and Mann-type method. Note that this method only involves the calculation of one projection in each iteration. Our algorithm is as follows:
\begin{algorithm}[H]
\caption{The inertial Mann-type Tseng's extragradient algorithm}
\label{alg2}
\begin{algorithmic}
\STATE {\textbf{Initialization:} Give  $ \delta>0 $, $\gamma_{1}>0$, $\phi \in(0,1)$. Let $x^{0},x^{1} \in H$ be two arbitrary initial points.}
\STATE \textbf{Iterative Steps}: Calculate the next iteration point $ x^{k+1} $ as follows:
\[
\left\{\begin{aligned}
&s^{k}=x^{k}+\delta_{k}(x^{k}-x^{k-1})\,,\\
&y^{k}=P_{C}(s^{k}-\gamma_{k} A s^{k})\,,\\
&z^{k}=y^{k}-\gamma_{k}(A y^{k}-A s^{k})\,,\\
&x^{k+1}=(1-\theta_{k}-\eta_{k}) z^{k}+\eta_{k} T z^{k}\,,
\end{aligned}\right.
\]
update inertial parameter $ \delta_{k} $ and step size $ \gamma_{k+1} $ through \eqref{alpha} and \eqref{lambda2}, respectively.
\end{algorithmic}
\end{algorithm}

The following lemma is crucial to the proof of the convergence of the algorithm.
\begin{lemma}[\cite{tanarxiv}]\label{lem41}
Suppose that Conditions \ref{con1} and \ref{con3} hold. Let sequence $\{z^{k}\}$ be generated by Algorithm~\ref{alg2}. Then, we have
\begin{equation*}
\|z^{k}-u\|^{2} \leq\|s^{k}-u\|^{2}-\Big(1-\phi^{2} \frac{\gamma_{k}^{2}}{\gamma_{k+1}^{2}}\Big)\|s^{k}-y^{k}\|^{2},\quad \forall u \in \mathrm{VI}(C, A)\,,
\end{equation*}
and
\begin{equation*}
\|z^{k}-y^{k}\| \leq \phi \frac{\gamma_{k}}{\gamma_{k+1}}\|s^{k}-y^{k}\| \,.
\end{equation*}
\end{lemma}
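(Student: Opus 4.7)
The plan is to mirror the standard Tseng extragradient analysis, with the adaptive step-size rule~\eqref{lambda2} taking the place of an a priori Lipschitz bound. I would treat the two inequalities in reverse order, since the first depends on the second.

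For the bound on $\|z^k - y^k\|$: directly from $z^k = y^k - \gamma_k(Ay^k - As^k)$ one has $\|z^k - y^k\| = \gamma_k\|Ay^k - As^k\|$. The update rule~\eqref{lambda2} guarantees that whenever $As^k \neq Ay^k$, $\gamma_{k+1} \leq \phi\|s^k - y^k\|/\|As^k - Ay^k\|$, hence $\|Ay^k - As^k\| \leq (\phi/\gamma_{k+1})\|s^k - y^k\|$; the degenerate case is trivial. Multiplying by $\gamma_k$ yields the second inequality.

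For the main estimate, I would expand
\begin{equation*}
\|z^k - u\|^2 = \|y^k - u\|^2 - 2\gamma_k\langle y^k - u, Ay^k - As^k\rangle + \gamma_k^2\|Ay^k - As^k\|^2.
\end{equation*}
Next, using $u \in C$ in the projection characterization $\langle s^k - \gamma_k As^k - y^k, u - y^k\rangle \leq 0$ together with the identity $\|y^k - u\|^2 = \|s^k - u\|^2 - \|s^k - y^k\|^2 - 2\langle s^k - y^k, y^k - u\rangle$, I would obtain
\begin{equation*}
\|y^k - u\|^2 \leq \|s^k - u\|^2 - \|s^k - y^k\|^2 - 2\gamma_k\langle As^k, y^k - u\rangle.
\end{equation*}
Plugging this back in, the two cross terms combine so that the $As^k$ contributions cancel and what remains is exactly $-2\gamma_k\langle Ay^k, y^k - u\rangle$. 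Monotonicity of $A$ together with $u \in \mathrm{VI}(C,A)$ and $y^k \in C$ then gives $\langle Ay^k, y^k - u\rangle \geq \langle Au, y^k - u\rangle \geq 0$, so this term can be dropped. Finally, invoking the bound $\gamma_k^2\|Ay^k - As^k\|^2 \leq \phi^2(\gamma_k^2/\gamma_{k+1}^2)\|s^k - y^k\|^2$ just established yields the first inequality.

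The main technical point is the algebraic bookkeeping in the step involving $\|y^k - u\|^2$: the projection inequality contributes a term in $\langle As^k, y^k - u\rangle$ which must cancel exactly against the corresponding piece of $\langle y^k - u, Ay^k - As^k\rangle$, leaving only $\langle Ay^k, y^k - u\rangle$, which is precisely the quantity that the monotonicity of $A$ and the variational inequality for $u$ render nonnegative. Once this cancellation is identified, the rest of the argument reduces to substituting the adaptive step-size bound.
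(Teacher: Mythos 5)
Your proof is correct and is essentially the standard argument for this lemma (the paper itself does not reprove it but cites \cite{tanarxiv}, where the same route is taken): the step-size rule \eqref{lambda2} gives $\gamma_k\|Ay^k-As^k\|\le \phi(\gamma_k/\gamma_{k+1})\|s^k-y^k\|$, the projection characterization at $u\in C$ combined with the three-point identity produces the $-\|s^k-y^k\|^2$ term and a cross term in $\langle As^k, y^k-u\rangle$ that cancels exactly against the expansion of $\|z^k-u\|^2$, and monotonicity of $A$ plus $u\in\mathrm{VI}(C,A)$ disposes of the remaining $\langle Ay^k, y^k-u\rangle$ term. No gaps.
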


\begin{theorem}\label{thm32}
Suppose that Conditions \ref{con1}--\ref{con4} hold. Then the sequence $\{x^{k}\}$ generated by Algorithm~\ref{alg2} converges to  $u \in \operatorname { Fix }(T) \cap \mathrm{VI}(C, A)$  in norm, where $\|u\|=\min \{\|p\|: p \in \operatorname { Fix }(T) \cap \mathrm{VI}(C, A)\}$.
\end{theorem}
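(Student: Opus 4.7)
The plan is to mirror the four-claim structure of the proof of Theorem~\ref{thm31}, substituting Lemma~\ref{lem41} for Lemma~\ref{lem32} wherever the subgradient-extragradient step was invoked, and handling the Tseng correction $z^k = y^k - \gamma_k(Ay^k - As^k)$ in the small places where the two algorithms differ. The limit $\lim_{k\to\infty}\gamma_k = \gamma \ge \min\{\gamma_1,\phi/L\}$ still holds (Lemma~\ref{lem31} only uses the update rule \eqref{lambda2}, not which projection variant is applied), so there exists $k_0$ with $1-\phi^2\gamma_k^2/\gamma_{k+1}^2 > 0$ for all $k\ge k_0$, and Lemma~\ref{lem41} immediately gives $\|z^k-u\|\le\|s^k-u\|$ for $k\ge k_0$.

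For Claim~1 (boundedness), I would reuse the estimate \eqref{eqz} verbatim, since $z^k$ only appears there through $\|z^k-u\|\le\|s^k-u\|$ and $\{\eta_k\}\subset(0,(1-\lambda)(1-\theta_k))$. Combined with $\|s^k-u\|\le\|x^k-u\|+\theta_k Q_1$ (coming from Remark~\ref{rem31}), the same telescoping argument shows $\|x^{k+1}-u\|\le\max\{\|x^0-u\|,\|u\|+Q_1\}$, so $\{x^k\}$, $\{s^k\}$, $\{z^k\}$ are bounded. For Claim~2, I would again expand $\|x^{k+1}-u\|^2 = \|(z^k-u)+\eta_k(Tz^k-z^k) - \theta_k z^k\|^2$, use $\eta$-demicontractivity \eqref{eq22} together with $\|s^k-u\|^2\le\|x^k-u\|^2 + \theta_k Q_2$, and then apply Lemma~\ref{lem41} to arrive at
\begin{equation*}
\eta_k[(1-\lambda)-\eta_k]\|z^k-Tz^k\|^2 + \Bigl(1-\phi^2\tfrac{\gamma_k^2}{\gamma_{k+1}^2}\Bigr)\|s^k-y^k\|^2 \le \|x^k-u\|^2 - \|x^{k+1}-u\|^2 + \theta_k Q_4.
\end{equation*}
Claim~3 is essentially identical to the corresponding claim in Theorem~\ref{thm31}; it depends only on $\|t^k-u\|\le\|s^k-u\|$ and the identity $x^{k+1}=(1-\theta_k)t^k-\theta_k\eta_k(z^k-Tz^k)$, neither of which changes between Algorithm~\ref{alg1} and Algorithm~\ref{alg2}.

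Claim~4 splits into the usual monotone case and the Maingé case (Lemma~\ref{lem22}). The one spot that needs genuine care, and the main obstacle, is recovering $\|z^k-x^k\|\to 0$ from Claim~2. Here Claim~2 only produces $\|s^k-y^k\|\to 0$, not $\|z^k-y^k\|\to 0$ directly; however, the second conclusion of Lemma~\ref{lem41} gives $\|z^k-y^k\|\le\phi\tfrac{\gamma_k}{\gamma_{k+1}}\|s^k-y^k\|$, so $\|z^k-y^k\|\to 0$ and hence $\|z^k-s^k\|\to 0$. Combined with $\|x^k-s^k\|=\theta_k\cdot\tfrac{\delta_k}{\theta_k}\|x^k-x^{k-1}\|\to 0$ from Remark~\ref{rem31}, this yields $\|z^k-x^k\|\to 0$ and then $\|x^{k+1}-x^k\|\to 0$. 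Taking a weakly convergent subsequence $x^{k_j}\rightharpoonup q$, we also have $s^{k_j}\rightharpoonup q$ and $z^{k_j}\rightharpoonup q$; since $\|s^k-y^k\|=\|s^k-P_C(s^k-\gamma_k A s^k)\|\to 0$ and $\gamma_k\to\gamma>0$, Lemma~\ref{lem21} (applied along the subsequence, with $\phi$ replaced by $\gamma$) forces $q\in\mathrm{VI}(C,A)$, while $\|z^k-Tz^k\|\to 0$ together with demiclosedness of $I-T$ at zero forces $q\in\operatorname{Fix}(T)$. The rest follows verbatim: $\limsup_{k\to\infty}\langle u,u-x^{k+1}\rangle\le 0$ because $u=P_{\mathrm{VI}(C,A)\cap\operatorname{Fix}(T)}0$, and then Claim~3 plus Lemma~\ref{lem23} closes the monotone case. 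The Maingé case is handled identically to Theorem~\ref{thm31}, using the inequalities in \eqref{eqs} and the same Claim~3 estimate evaluated along indices $m_k$, which gives $\|x^k-u\|^2\le\|x^{m_k+1}-u\|^2\to 0$, completing the proof. \qed
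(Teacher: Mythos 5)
Your proposal is correct and follows essentially the same route as the paper, which itself establishes Claims 1--3 for Algorithm~\ref{alg2} and defers Claim~4 to the argument of Theorem~\ref{thm31}. In fact you supply the one detail the paper leaves to the reader: since Claim~2 here only yields $\|s^k-y^k\|\to 0$, the second inequality of Lemma~\ref{lem41} is exactly what is needed to recover $\|z^k-y^k\|\to 0$ and hence $\|z^k-x^k\|\to 0$.
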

\begin{proof}
By $\lim _{k \rightarrow \infty}\big(1-\phi^{2} \frac{\gamma_{k}^{2}}{\gamma_{k+1}^{2}}\big)=1-\phi^{2}>0$, one concludes that there exists $k_{0} \in \mathbb{N}$ such that
\begin{equation}\label{eqd}
1-\phi^{2} \frac{\gamma_{k}^{2}}{\gamma_{k+1}^{2}}>0,\quad \forall k \geq k_{0}\,.
\end{equation}
Combining Lemma \ref{lem41} and \eqref{eqd}, it follows that
\begin{equation}\label{eqx}
\|z^{k}-u\| \leq\|s^{k}-u\|, \quad \forall k \geq k_{0}\,.
\end{equation}
\noindent\textbf{Claim 1.} The sequence $\{x^{k}\}$ is bounded. Using the same arguments as in the Theorem~\ref{thm31} of Claim 1, we get that  $\{x^{k}\}$ is bounded. So $ \{s^{k}\} $ and $\{z^{k}\}$ are bounded.

\noindent\textbf{Claim 2.}
\[
\begin{aligned}
&\quad\eta_{k}\left[(1-\lambda)-\eta_{k}\right]\|z^{k}-T z^{k}\|^{2}+\Big(1-\phi^{2} \frac{\gamma_{k}^{2}}{\gamma_{k+1}^{2}}\Big)\|y^{k}-s^{k}\|^{2}\\ &\leq\|x^{k}-u\|^{2}-\|x^{k+1}-u\|^{2}+\theta_{k} Q_{4}\,.
\end{aligned}
\]
Indeed, using \eqref{eqo} and \eqref{eqx} and Lemma \ref{lem41}, we obtain
\begin{equation}
\begin{aligned}
\|x^{k+1}-u\|^{2}
\leq&\|z^{k}-u\|^{2}+\eta_{k}^{2}\|T z^{k}-z^{k}\|^{2}+2 \eta_{k}\langle T z^{k}-z^{k}, z^{k}-u\rangle+2 \theta_{k}\langle z^{k}, u-x^{k+1}\rangle\\
\leq&\|z^{k}-u\|^{2}+\eta_{k}^{2}\|T z^{k}-z^{k}\|^{2}+\eta_{k}(\lambda-1)\|z^{k}-T z^{k}\|^{2}+\theta_{k} Q_{3} \\
\leq&\|x^{k}-u\|^{2}+\theta_{k} Q_{4} -\eta_{k}\left[(1-\lambda)-\eta_{k}\right]\|z^{k}-T z^{k}\|^{2}\\
&-\Big(1-\phi^{2} \frac{\gamma_{k}^{2}}{\gamma_{k+1}^{2}}\Big)\|y^{k}-s^{k}\|^{2}\,,
\end{aligned}
\end{equation}
where $ Q_{4}=Q_{2}+Q_{3} $. Thus, we can obtain the desired result through a direct calculation.

\noindent\textbf{Claim 3.}
\begin{equation*}
\begin{aligned}
\|x^{k+1}-u\|^{2}\leq &(1-\theta_{k})\|x^{k}-u\|^{2} +\theta_{k}\big[2\eta_{k}\|z^{k}-T z^{k}\|\|x^{k+1}-u\|+2\langle u, u-x^{k+1}\rangle\Big.\\
&\Big.+\frac{3 M \delta_{k}}{ \theta_{k}}\|x^{k}-x^{k-1}\|\big], \quad  \forall k \geq k_{0}\,.
\end{aligned}
\end{equation*}
The desired result can be obtained by using the same arguments as in the Theorem~\ref{thm31} of Claim~3.

\noindent\textbf{Claim 4.} The sequence $\{\|x^{k}-u\|^{2}\}$ converges to zero. The proof is similar to the Claim 4 in Theorem~\ref{thm31}. We leave it to the reader for confirmation. \qed
\end{proof}

\subsection{The modified inertial Mann-type subgradient extragradient algorithm}
In this subsection, we present two new modified inertial Mann-type extragradient algorithms to solve fixed point problems and variational inequality problems. First of all, we assume that the next proposed Algorithm~\ref{alg3} and Algorithm~\ref{alg4} satisfies Conditions \ref{con1}--\ref{con3} and the following Condition~\ref{con5}.
\begin{enumerate}[label=(C5)]
\item Positive sequence $ \{\zeta_{k}\} $ satisfies $\lim_{k \rightarrow \infty} \frac{\zeta_{k}}{1-\theta_{k}}=0$, where $ \{\theta_{k}\}\subset (0,1) $ satisfies
$\lim _{k \rightarrow \infty} (1-\theta_{k})=0$ and $\sum_{n=1}^{\infty} (1-\theta_{k})=\infty$. Let $\left\{\eta_{k}\right\}$ be a real sequence such that $\eta_{k} \in(a, \frac{(1-\lambda) \theta_{k}}{\lambda+\theta_{k}}) \subset(a, \frac{1-\lambda}{1+\lambda}) \subset(a, 1-\lambda)$ for some $a>0$. \label{con5}
\end{enumerate}
Now, we are in a position to show our algorithm, which reads as follows:
\begin{algorithm}[H]
\caption{The modified inertial Mann-type subgradient extragradient algorithm}
\label{alg3}
\begin{algorithmic}
\STATE {\textbf{Initialization:} Give  $ \delta>0 $, $\gamma_{1}>0$, $\phi \in(0,1)$. Let $x^{0},x^{1} \in H$ be two arbitrary initial points.}
\STATE \textbf{Iterative Steps}: Calculate the next iteration point $ x^{k+1} $ as follows:
\[
\left\{\begin{aligned}
&s^{k}=x^{k}+\delta_{k}(x^{k}-x^{k-1})\,,\\
&y^{k}=P_{C}(s^{k}-\gamma_{k} A s^{k})\,,\\
&H_{k}=\big\{x \in H \mid\langle s^{k}-\gamma_{k} A s^{k}-y^{k}, x-y^{k}\rangle \leq 0\big\}\,,\\		
&z^{k}=P_{H_{k}}(s^{k}-\gamma_{k} A y^{k})\,,\\
&x^{k+1}=(1-\eta_{k})(\theta_{k} z^{k})+\eta_{k} T z^{k}\,,
\end{aligned}\right.
\]
update inertial parameter $ \delta_{k} $ and step size $ \gamma_{k+1} $ through \eqref{alpha} and \eqref{lambda2}, respectively.
\end{algorithmic}
\end{algorithm}
\begin{theorem}\label{thm33}
Suppose that Conditions \ref{con1}--\ref{con3} and \ref{con5} hold. Then the iterative sequence $\{x^{k}\}$ formed  by Algorithm~\ref{alg3} converges to  $u \in \operatorname { Fix }(T) \cap \mathrm{VI}(C, A)$  in norm, where $\|u\|=\min \{\|p\|: p \in \operatorname { Fix }(T) \cap \mathrm{VI}(C, A)\}$.
\end{theorem}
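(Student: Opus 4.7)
The plan is to mirror the four-claim argument of Theorem~\ref{thm31}, with $\mu_k := (1-\theta_k)(1-\eta_k)$ playing the role that $\theta_k$ plays there. The key algebraic identity
\[
(1-\eta_k)\theta_k + \eta_k + \mu_k = 1
\]
lets me decompose the modified Mann step as $x^{k+1}-u = [(1-\eta_k)\theta_k(z^k-u)+\eta_k(Tz^k-u)] - \mu_k u$. Condition~\ref{con5} is calibrated so that $\delta_k\|x^k-x^{k-1}\|/(1-\theta_k)\to 0$ (the Remark~\ref{rem31} argument with $1-\theta_k$ replacing $\theta_k$) and $\sum_k\mu_k=\infty$ (since $\sum_k(1-\theta_k)=\infty$ and $\eta_k$ is bounded away from $1$ via $\eta_k<\frac{1-\lambda}{1+\lambda}$). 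These are exactly what Xu's lemma (Lemma~\ref{lem23}) needs when applied with step sizes $\mu_k$. Since the subgradient extragradient part of Algorithm~\ref{alg3} is identical to that of Algorithm~\ref{alg1}, Lemma~\ref{lem32} applies verbatim.

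For Claim~1 I would expand $\|(1-\eta_k)\theta_k(z^k-u)+\eta_k(Tz^k-u)\|^2$ via~\eqref{eq21} and~\eqref{eq23}: writing $\alpha=(1-\eta_k)\theta_k$ and $\beta=\eta_k$, this produces
\[
(\alpha+\beta)^2\|z^k-u\|^2 + \eta_k\bigl[(\alpha+\beta)\lambda-\alpha\bigr]\|z^k-Tz^k\|^2,
\]
and a direct calculation gives $(\alpha+\beta)\lambda-\alpha = \eta_k[\lambda+\theta_k(1-\lambda)]-\theta_k(1-\lambda)$, which is $\leq 0$ whenever $\eta_k\leq\frac{(1-\lambda)\theta_k}{\lambda+\theta_k(1-\lambda)}$; the strictly tighter upper bound in Condition~\ref{con5} clearly implies this. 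Dropping the non-positive term and using Lemma~\ref{lem32} yields $\|x^{k+1}-u\|\leq(1-\mu_k)\|s^k-u\|+\mu_k\|u\|$, and the standard inertial estimate $\|s^k-u\|\leq\|x^k-u\|+(1-\theta_k)Q_1$ closes the boundedness argument. For Claim~2 I would retain the non-positive demicontractivity term and invoke Lemma~\ref{lem32} to expose $-(1-\phi\gamma_k/\gamma_{k+1})(\|y^k-s^k\|^2+\|z^k-y^k\|^2)$, then combine with $\|s^k-u\|^2\leq\|x^k-u\|^2+3M\delta_k\|x^k-x^{k-1}\|$ as in Theorem~\ref{thm31}. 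For Claim~3, applying $\|x+y\|^2\leq\|x\|^2+2\langle y,x+y\rangle$ to the above decomposition produces
\[
\|x^{k+1}-u\|^2 \leq (1-\mu_k)\|x^k-u\|^2 + \mu_k B_k,
\]
where $B_k$ collects $2\langle u,u-x^{k+1}\rangle$, a term proportional to $\frac{\delta_k\|x^k-x^{k-1}\|}{1-\theta_k}$, and a term proportional to $\|z^k-Tz^k\|\|x^{k+1}-u\|$.

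Claim~4 then runs verbatim: in the monotone case, Claim~2 forces $\|z^k-Tz^k\|,\|y^k-s^k\|,\|z^k-y^k\|\to 0$, so Lemma~\ref{lem21} together with demiclosedness of $I-T$ identifies every weak subsequential limit of $\{x^k\}$ (equivalently of $\{s^k\}$, since $\|x^k-s^k\|\to 0$) as an element of $\operatorname{Fix}(T)\cap\mathrm{VI}(C,A)$; the minimum-norm characterization of $u$ then gives $\limsup\langle u,u-x^{k+1}\rangle\leq 0$, and Xu's lemma applied with step sizes $\mu_k$ concludes the case. The non-monotone case uses Mainge's subsequence (Lemma~\ref{lem22}) exactly as in Theorem~\ref{thm31}. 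The principal obstacle is the bookkeeping in Claims~1--3: the modified Mann update is not a convex combination of $z^k$ and $Tz^k$ alone, so one must recognize $\mu_k=(1-\theta_k)(1-\eta_k)$ as the correct ``deficit'' coefficient and verify that the precise upper bound on $\eta_k$ in Condition~\ref{con5} is exactly what keeps the demicontractivity coefficient non-positive.
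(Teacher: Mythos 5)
Your proposal is correct and follows essentially the same route as the paper's proof: the same decomposition $x^{k+1}-u=[\theta_k(1-\eta_k)(z^k-u)+\eta_k(Tz^k-u)]-(1-\eta_k)(1-\theta_k)u$ with deficit coefficient $\mu_k=(1-\eta_k)(1-\theta_k)$, the same use of Lemma~\ref{lem32}, the same four-claim structure, and the same verification that the bound $\eta_k<\frac{(1-\lambda)\theta_k}{\lambda+\theta_k}$ in Condition~\ref{con5} forces the demicontractivity coefficient $\eta_k\bigl[\lambda\eta_k-\theta_k(1-\lambda)(1-\eta_k)\bigr]$ to be non-positive. Your exact threshold $\eta_k\le\frac{(1-\lambda)\theta_k}{\lambda+\theta_k(1-\lambda)}$ is a slightly sharper observation than the paper states explicitly, but the argument is otherwise identical.
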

\begin{proof}
\noindent \textbf{Claim 1.} The sequence $\{x^{k}\}$ is bounded. From the definition of $ \{s^{k}\} $, one has
\begin{equation}\label{c1}
\begin{aligned}
\|s^{k}-u\|&=\|x^{k}+\delta_{k}(x^{k}-x^{k-1})-u\| \\
&\leq\|x^{k}-u\|+(1-\theta_{k}) \cdot \frac{\delta_{k}}{1-\theta_{k}}\|x^{k}-x^{k-1}\|\,.
\end{aligned}
\end{equation}
According to condition \ref{con5}, we have $\frac{\delta_{k}}{1-\theta_{k}}\|x^{k}-x^{k-1}\| \rightarrow 0$ as $ k \rightarrow \infty $. Thus, there is a constant $Q_{1}>0$ such that
\begin{equation}\label{r1}
\frac{\delta_{k}}{1-\theta_{k}}\|x^{k}-x^{k-1}\| \leq Q_{1}, \quad \forall k \geq 1\,.
\end{equation}
Combining \eqref{eqq}, \eqref{c1} and \eqref{r1}, we find that
\begin{equation}\label{f1}
\|z^{k}-u\| \leq\|s^{k}-u\| \leq\|x^{k}-u\|+(1-\theta_{k}) Q_{1},\quad \forall k \geq k_{0}\,.
\end{equation}
Furthermore, by the definition of $ \{x^{k+1}\} $, one obtains
\begin{equation}\label{eqf}
\begin{aligned}
\|x^{k+1}-u\|  
&=\|\theta_{k} (1-\eta_{k})(z^{k}-u)+\eta_{k}(T z^{k}-u)-(1-\eta_{k})(1-\theta_{k}) u\| \\
& \leq\|\theta_{k} (1-\eta_{k})(z^{k}-u)+\eta_{k}(T z^{k}-u)\|+(1-\eta_{k})(1-\theta_{k})\|u\|\,.
\end{aligned}
\end{equation}
Since $\eta_{k}<\frac{(1-\lambda) \theta_{k}}{\lambda+\theta_{k}}$, one infers that
\[
\lambda \eta_{k}<(1-\lambda) \theta_{k}-\theta_{k} \eta_{k}<\theta_{k}(1-\lambda) (1-\eta_{k})\,.
\]
From \eqref{eq21} and \eqref{eq23}, we get
\begin{equation}\label{eqe}
\begin{aligned}
&\quad\|\theta_{k} (1-\eta_{k})(z^{k}-u)+\eta_{k}(T z^{k}-u)\|^{2} \\
=&(\theta_{k} (1-\eta_{k}))^{2}\|z^{k}-u\|^{2}+\eta_{k}^{2}\|T z^{k}-u\|^{2}+2\theta_{k} (1-\eta_{k}) \eta_{k}\langle T z^{k}-u, z^{k}-u\rangle \\
\leq&(\theta_{k} (1-\eta_{k}))^{2}\|z^{k}-u\|^{2}+\eta_{k}^{2}\|z^{k}-u\|^{2}+\eta_{k}^{2} \lambda\|T z^{k}-z^{k}\|^{2} \\
&+2\theta_{k} (1-\eta_{k}) \eta_{k}\|z^{k}-u\|^{2}-\theta_{k} (1-\lambda)(1-\eta_{k}) \eta_{k}\|T z^{k}-z^{k}\|^{2} \\
=&(\theta_{k} (1-\eta_{k})+\eta_{k})^{2}\|z^{k}-u\|^{2}+\eta_{k}(\lambda \eta_{k}-\theta_{k}(1-\lambda) (1-\eta_{k}))\|T z^{k}-z^{k}\|^{2} \\
\leq&(\theta_{k} (1-\eta_{k})+\eta_{k})^{2}\|z^{k}-u\|^{2}\,,
\end{aligned}
\end{equation}
which combining with \eqref{f1} further yields that
\begin{equation}\label{eqc}
\begin{aligned}
&\quad\|\theta_{k} (1-\eta_{k})(z^{k}-u)+\eta_{k}(T z^{k}-u)\|\\
& \leq(\theta_{k} (1-\eta_{k})+\eta_{k})\|z^{k}-u\| \\
& \leq(1-(1-\eta_{k})(1-\theta_{k}))\|x^{k}-u\|+(1-\theta_{k}) Q_{1}\,.
\end{aligned}
\end{equation}
Combining \eqref{eqf} and \eqref{eqc}, we have
\[
\begin{aligned}
\|x^{k+1}-u\|  \leq&(1-(1-\eta_{k})(1-\theta_{k}))\|x^{k}-u\|\\
&+(1-\eta_{k})(1-\theta_{k})\big[\|u\| + \frac{Q_{1}}{1-\eta_{k}}\big]\\
\leq& \max \big\{\|x^{k}-u\|,\|u\|+ \frac{Q_{1}}{1-\eta_{k}}\big\} \\
\leq& \cdots \leq \max \big\{\|x^{0}-u\|,\|u\|+ \frac{Q_{1}}{1-\eta_{k}}\big\}\,.
\end{aligned}
\]
Consequently, $\{x^{k}\}$ is bounded. So the sequences $ \{s^{k}\} $ and $\{z^{k}\}$ are bounded.

\noindent \textbf{Claim 2.}
\[
\begin{aligned}
&\quad\eta_{k}(1-\lambda-\eta_{k})\|T z^{k}-z^{k}\|^{2}+\Big(1-\phi \frac{\gamma_{k}}{\gamma_{k+1}}\Big)\|y^{k}-s^{k}\|^{2}+\Big(1-\phi \frac{\gamma_{k}}{\gamma_{k+1}}\Big)\|z^{k}-y^{k}\|^{2}\\ &\leq\|x^{k}-u\|^{2}-\|x^{k+1}-u\|^{2}+(1-\theta_{k}) Q_{4}\,,
\end{aligned}
\]
for some $M>0$. Indeed, it follows from \eqref{f1} that
\begin{equation}\label{eqo1}
\begin{aligned}
\|s^{k}-u\|^{2} & \leq(\|x^{k}-u\|+(1-\theta_{k}) Q_{1})^{2} \\
&=\|x^{k}-u\|^{2}+(1-\theta_{k})(2 Q_{1}\|x^{k}-u\|+(1-\theta_{k}) Q_{1}^{2}) \\
& \leq\|x^{k}-u\|^{2}+(1-\theta_{k}) Q_{2}
\end{aligned}
\end{equation}
for some $Q_{2}>0 $. Using \eqref{eq22}, \eqref{f1}, \eqref{eqo1} and Lemma~\ref{lem32}, we obtain
\begin{equation*}
\begin{aligned}
\|x^{k+1}-u\|^{2} 
=&\|(z^{k}-u)+\eta_{k}(T z^{k}-z^{k})-(1-\eta_{k})(1-\theta_{k}) z^{k}\|^{2} \\
\leq&\|z^{k}-u\|^{2}+\eta_{k}^{2}\|T z^{k}-z^{k}\|^{2}+2 \eta_{k}\langle T z^{k}-z^{k}, z^{k}-u\rangle \\
&-2(1-\eta_{k})(1-\theta_{k})\langle z^{k}, x^{k+1}-u\rangle \\
\leq &\|z^{k}-u\|^{2}+\eta_{k}^{2}\|T z^{k}-z^{k}\|^{2}-\eta_{k}(1-\lambda)\|T z^{k}-z^{k}\|^{2} \\
&+2(1-\eta_{k})(1-\theta_{k})\langle z^{k}, u-x^{k+1}\rangle \\
\leq&\|z^{k}-u\|^{2}-\eta_{k}(1-\lambda-\eta_{k})\|T z^{k}-z^{k}\|^{2}+(1-\theta_{k}) Q_{3} \\
\leq &\|x^{k}-u\|^{2}-\eta_{k}(1-\lambda-\eta_{k})\|T z^{k}-z^{k}\|^{2}+(1-\theta_{k}) Q_{4}\\
&-\Big(1-\phi \frac{\gamma_{k}}{\gamma_{k+1}}\Big)\|y^{k}-s^{k}\|^{2} -\Big(1-\phi \frac{\gamma_{k}}{\gamma_{k+1}}\Big)\|z^{k}-y^{k}\|^{2}\,,
\end{aligned}
\end{equation*}
where $ Q_{4}=Q_{2}+Q_{3} $. Thus, we can obtain the desired result through a direct calculation.

\noindent \textbf{Claim 3.}
\[
\begin{aligned}
\|x^{k+1}-u\|^{2} \leq &\left[1-(1-\eta_{k})(1-\theta_{k})\right]\|x^{k}-u\|^{2}+(1-\eta_{k})(1-\theta_{k})\big[2\langle u, u-x^{k+1}\rangle\big.\\
&\big.+2 \eta_{k}\|T z^{k}-z^{k}\|\|x^{k+1}-u\|+\frac{3M}{(1-\eta_{k})}\cdot\frac{\delta_{k}}{(1-\theta_{k})}\|x^{k}-x^{k-1}\|\big]\,.
\end{aligned}
\]
Indeed, take $t^{k}=(1-\eta_{k}) z^{k}+\eta_{k} T z^{k}$, then as proved in Claim 3 of Theorem \ref{thm31}, we get that $\|t^{k}-u\| \leq\|s^{k}-u\|$.  This together with \eqref{eqk} yields that
\[
\begin{aligned}
\|x^{k+1}-u\|^{2}=&\|t^{k}-u-(1-\eta_{k})(1-\theta_{k}) z^{k}\|^{2} \\
=& \|\left[1-(1-\eta_{k})(1-\theta_{k})\right](t^{k}-u) +(1-\eta_{k})(1-\theta_{k})\big[(t^{k}-z^{k})- u\big] \|^{2} \\
=& \|\left[1-(1-\eta_{k})(1-\theta_{k})\right](t^{k}-u) +(1-\eta_{k})(1-\theta_{k}) \big[\eta_{k}(T z^{k}-z^{k})- u\big] \|^{2} \\
\leq &\left[1-(1-\eta_{k})(1-\theta_{k})\right]^{2}\|t^{k}-u\|^{2} \\
&+2(1-\eta_{k})(1-\theta_{k})\langle\eta_{k}(T z^{k}-z^{k})-u, x^{k+1}-u\rangle \\
\leq &\left[1-(1-\eta_{k})(1-\theta_{k})\right]\|x^{k}-u\|^{2}+(1-\eta_{k})(1-\theta_{k})\big[2\langle u, u-x^{k+1}\rangle\big.\\
&\big.+2 \eta_{k}\|T z^{k}-z^{k}\|\|x^{k+1}-u\|+\frac{3M}{(1-\eta_{k})}\cdot\frac{\delta_{k}}{(1-\theta_{k})}\|x^{k}-x^{k-1}\|\big]\,.
\end{aligned}
\]
\noindent \textbf{Claim 4.} The sequence $\{\|x^{k}-u\|^{2}\}$ converges to zero. The proof of this result is similar to that of Theorem~\ref{thm31}. We leave it to the reader for confirmation. \qed
\end{proof}

\subsection{The modified inertial Mann-type Tseng's extragradient algorithm}
Finally, we introduce a modified inertial Mann-type Tseng's extragradient algorithm. The details of the algorithm are described as follows:
\begin{algorithm}[H]
\caption{The modified inertial Mann-type Tseng's extragradient algorithm}
\label{alg4}
\begin{algorithmic}
\STATE {\textbf{Initialization:} Give  $ \delta>0 $, $\gamma_{1}>0$, $\phi \in(0,1)$. Let $x^{0},x^{1} \in H$ be two arbitrary initial points.}
\STATE \textbf{Iterative Steps}: Calculate the next iteration point $ x^{k+1} $ as follows:
\[
\left\{\begin{aligned}
&s^{k}=x^{k}+\delta_{k}(x^{k}-x^{k-1})\,,\\
&y^{k}=P_{C}(s^{k}-\gamma_{k} A s^{k})\,,\\
&z^{k}=y^{k}-\gamma_{k}(A y^{k}-A s^{k})\,,\\
&x^{k+1}=(1-\eta_{k})(\theta_{k} z^{k})+\eta_{k} T z^{k}\,,
\end{aligned}\right.
\]
update inertial parameter $ \delta_{k} $ and step size $ \gamma_{k+1} $ through \eqref{alpha} and \eqref{lambda2}, respectively.
\end{algorithmic}
\end{algorithm}
\begin{theorem}\label{thm34}
Suppose that Conditions \ref{con1}--\ref{con3} and \ref{con5} hold. Then the iterative sequence $\{x^{k}\}$ formed by Algorithm~\ref{alg4} converges to  $u \in \operatorname { Fix }(T) \cap \mathrm{VI}(C, A)$  in norm, where $\|u\|=\min \{\|p\|: p \in \operatorname { Fix }(T) \cap \mathrm{VI}(C, A)\}$.
\end{theorem}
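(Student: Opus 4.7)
The plan is to reproduce the four-claim architecture of Theorem~\ref{thm33}, substituting only the single Tseng-type error estimate from Lemma~\ref{lem41} in place of the two subgradient-extragradient projection terms. Because Algorithm~\ref{alg4} has exactly the same outer recursion $x^{k+1}=(1-\eta_{k})(\theta_{k}z^{k})+\eta_{k}Tz^{k}$ as Algorithm~\ref{alg3}, every convex-combination manipulation around the Mann step carries over verbatim; only the inequality linking $\|z^{k}-u\|$ to $\|s^{k}-u\|$ and the descent-type bound in Claim~2 need updating.

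First, from $\lim_{k\to\infty}(1-\phi^{2}\gamma_{k}^{2}/\gamma_{k+1}^{2})=1-\phi^{2}>0$ and Lemma~\ref{lem41}, I fix $k_{0}$ so that $\|z^{k}-u\|\le\|s^{k}-u\|$ for all $k\ge k_{0}$. Claim~1 (boundedness) then proceeds exactly as in Theorem~\ref{thm33}: Condition~\ref{con5} gives $\frac{\delta_{k}}{1-\theta_{k}}\|x^{k}-x^{k-1}\|\to 0$, and the constraint $\eta_{k}<\frac{(1-\lambda)\theta_{k}}{\lambda+\theta_{k}}$, equivalently $\lambda\eta_{k}<\theta_{k}(1-\lambda)(1-\eta_{k})$, lets me reuse the estimate $\|\theta_{k}(1-\eta_{k})(z^{k}-u)+\eta_{k}(Tz^{k}-u)\|\le(\theta_{k}(1-\eta_{k})+\eta_{k})\|z^{k}-u\|$ derived via \eqref{eq21} and \eqref{eq23}, producing the same recursive bound $\|x^{k+1}-u\|\le\max\{\|x^{0}-u\|,\|u\|+Q_{1}/(1-\eta_{k})\}$. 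Consequently $\{x^{k}\}$, $\{s^{k}\}$, $\{z^{k}\}$ are bounded.

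For Claim~2, I mimic the expansion of $\|x^{k+1}-u\|^{2}$ from Theorem~\ref{thm33} but invoke Lemma~\ref{lem41} to replace $\|s^{k}-u\|^{2}-(1-\phi\gamma_{k}/\gamma_{k+1})\|y^{k}-s^{k}\|^{2}-(1-\phi\gamma_{k}/\gamma_{k+1})\|z^{k}-y^{k}\|^{2}$ by $\|s^{k}-u\|^{2}-(1-\phi^{2}\gamma_{k}^{2}/\gamma_{k+1}^{2})\|s^{k}-y^{k}\|^{2}$; combined with the analogue of \eqref{eqo1} this delivers $\eta_{k}(1-\lambda-\eta_{k})\|Tz^{k}-z^{k}\|^{2}+(1-\phi^{2}\gamma_{k}^{2}/\gamma_{k+1}^{2})\|y^{k}-s^{k}\|^{2}\le\|x^{k}-u\|^{2}-\|x^{k+1}-u\|^{2}+(1-\theta_{k})Q_{4}$. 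Claim~3, which produces the recursive inequality fed into Xu's Lemma~\ref{lem23}, involves only the outer Mann combination and is therefore identical to Claim~3 of Theorem~\ref{thm33}.

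Claim~4 splits via Mainge's Lemma~\ref{lem22} into a monotone case and a non-monotone case, following the template of Theorem~\ref{thm33} line by line. The one genuinely new point, and the step I expect to require the most care, is the identification of weak cluster points with elements of $\mathrm{VI}(C,A)\cap\operatorname{Fix}(T)$, since Algorithm~\ref{alg4} no longer uses the halfspace $H_{k}$. Here I would proceed as in Theorem~\ref{thm32}: from Claim~2 together with Condition~\ref{con5} I get $\|s^{k}-y^{k}\|\to 0$ and $\|Tz^{k}-z^{k}\|\to 0$; then the Tseng identity $z^{k}-s^{k}=(y^{k}-s^{k})-\gamma_{k}(Ay^{k}-As^{k})$ combined with the $L$-Lipschitz continuity of $A$ and the uniform lower bound $\gamma_{k}\ge\min\{\gamma_{1},\phi/L\}$ from Lemma~\ref{lem31} yields $\|z^{k}-s^{k}\|\to 0$, while $\|s^{k}-x^{k}\|=\delta_{k}\|x^{k}-x^{k-1}\|\to 0$ by Condition~\ref{con5}. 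Thus any weak cluster point $q$ of $\{x^{k}\}$ is also a weak cluster point of $\{s^{k}\}$ and $\{z^{k}\}$; applying Lemma~\ref{lem21} to $P_{C}(I-\gamma A)$ gives $q\in\mathrm{VI}(C,A)$, and the demiclosedness of $I-T$ at zero gives $q\in\operatorname{Fix}(T)$. The remainder (characterising $u=P_{\mathrm{VI}(C,A)\cap\operatorname{Fix}(T)}0$ via $\limsup\langle u,u-x^{k+1}\rangle\le 0$ and closing the argument through Lemma~\ref{lem23} or Lemma~\ref{lem22}) is then verbatim Theorem~\ref{thm33}.
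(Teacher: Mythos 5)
Your proposal is correct and follows essentially the same route as the paper: establish $\|z^{k}-u\|\le\|s^{k}-u\|$ for large $k$ via Lemma~\ref{lem41}, reuse Claims~1 and~3 of Theorem~\ref{thm33} verbatim (they depend only on the outer Mann recursion), substitute the Tseng estimate into Claim~2, and close Claim~4 with Lemmas~\ref{lem22} and~\ref{lem23}. The only detail worth tightening is that $\|z^{k}-s^{k}\|\to 0$ uses the upper bound $\gamma_{k}\le\gamma_{1}$ (monotonicity of the step size) together with Lipschitz continuity, while the lower bound from Lemma~\ref{lem31} is what is needed when invoking Lemma~\ref{lem21}; this is a cosmetic attribution slip, not a gap.
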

\begin{proof}
\noindent\textbf{Claim 1.} The sequence $\{x^{k}\}$ is bounded. As proved in Theorem~\ref{thm32}, we also get that $ \|z^{k}-u\| \leq\|s^{k}-u\|, \forall k \geq k_{0}$. Using the same arguments as in   Theorem~\ref{thm33} of Claim 1, one concludes that  $\{x^{k}\}$ is bounded. So $ \{s^{k}\} $ and $\{z^{k}\}$ are bounded.

\noindent \textbf{Claim 2.}
\[
\begin{aligned}
&\quad\eta_{k}(1-\lambda-\eta_{k})\|T z^{k}-z^{k}\|^{2}+\Big(1-\phi^{2} \frac{\gamma_{k}^{2}}{\gamma_{k+1}^{2}}\Big)\|y^{k}-s^{k}\|^{2}\\ &\leq\|x^{k}-u\|^{2}-\|x^{k+1}-u\|^{2}+(1-\theta_{k}) Q_{4}\,,
\end{aligned}
\]
for some $Q_{4}>0$. Indeed, using \eqref{eqo1} and Lemma \ref{lem41}, we have
\begin{equation*}
\begin{aligned}
\|x^{k+1}-u\|^{2}
\leq &\|z^{k}-u\|^{2}+\eta_{k}^{2}\|T z^{k}-z^{k}\|^{2}-\eta_{k}(1-\lambda)\|T z^{k}-z^{k}\|^{2} \\
&+2(1-\eta_{k})(1-\theta_{k})\langle z^{k}, u-x^{k+1}\rangle \\
\leq&\|z^{k}-u\|^{2}-\eta_{k}(1-\lambda-\eta_{k})\|T z^{k}-z^{k}\|^{2}+(1-\theta_{k}) Q_{3} \\
\leq &\|x^{k}-u\|^{2}-\eta_{k}(1-\lambda-\eta_{k})\|T z^{k}-z^{k}\|^{2}+(1-\theta_{k}) Q_{4}\\
&-\Big(1-\phi^{2} \frac{\gamma_{k}^{2}}{\gamma_{k+1}^{2}}\Big)\|y^{k}-s^{k}\|^{2} \,,
\end{aligned}
\end{equation*}
where $ Q_{4}=Q_{2}+Q_{3} $. Thus, we can obtain the desired result through a direct calculation.

\noindent\textbf{Claim 3.}
\[
\begin{aligned}
\|x^{k+1}-u\|^{2} \leq &\left[1-(1-\eta_{k})(1-\theta_{k})\right]\|x^{k}-u\|^{2}+(1-\eta_{k})(1-\theta_{k})\big[2\langle u, u-x^{k+1}\rangle\big.\\
&\big.+2 \eta_{k}\|T z^{k}-z^{k}\|\|x^{k+1}-u\|+\frac{3M}{(1-\eta_{k})}\cdot\frac{\delta_{k}}{(1-\theta_{k})}\|x^{k}-x^{k-1}\|\big]\,.
\end{aligned}
\]
The desired result can be obtained by using the same arguments as in the Theorem~\ref{thm33} of Claim~3.

\noindent\textbf{Claim 4.} The sequence $\{\|x^{k}-u\|^{2}\}$ converges to zero. The proof is similar to Claim 4 in Theorem~\ref{thm31}. We leave it to the reader for confirmation. \qed
\end{proof}
\section{Numerical examples}\label{sec4}
In this section, we provide several computational tests to illustrate the numerical behavior of our proposed algorithms (For convenience, we abbreviate, Algorithm~\ref{alg1} (iMSEGM), Algorithm~\ref{alg2} (iMTEGM), Algorithm~\ref{alg3} (iMMSEGM) and Algorithm~\ref{alg4} (iMMTEGM)) and compare them with some existing strong convergent methods, including  the Halpern subgradient extragradient method \eqref{HSEGM} \cite{KS}, the self adaptive Tseng's extragradient method \eqref{STEGM} \cite{VTEGM}, the Mann-type subgradient extragradient method \eqref{MSEGM} \cite{THSEGM}, the modified Mann-type subgradient extragradient method \eqref{MMSEGM} \cite{THSEGM}, the Viscosity-type subgradient extragradient method \eqref{VSEGM} \cite{TVNA}, and the  Viscosity-type Tseng's extragradient method \eqref{VTEGM} \cite{TVNA}.

The parameter settings of all algorithms are as follows, see Table~\ref{algset} for details. In our experiment examples, the solution $ x^{*} $ of the problems are known. Therefore, we take $ D_{k}=\|x^{k}-x^{*}\| $ to evaluate the $ k $-th iteration error. Note that the sequence $\left\{D_{k}\right\}\rightarrow 0$ implies that $\{x^{k}\}$ converges to the solution of the problem. In addition, we use the FOM Solver~\cite{FOM} to effectively calculate the projections onto $ C $ and $ H_{k} $. All the programs were implemented in MATLAB 2018a on a Intel(R) Core(TM) i5-8250U CPU @ 1.60GHz computer with RAM 8.00 GB.
\begin{table}[h]
\centering
\caption{Parameter setting for all algorithms}
\label{algset}
\begin{tabular}{rl}
\toprule
Algorithms & Parameters	\\
\midrule
HSEGM &  $ \theta_{k}=1/(k+1) $, $ \eta_{k}=k/(2k+1) $, $ \gamma=0.99/L $.\\	
MSEGM &  $ \theta_{k}=1/(k+1) $, $ \eta_{k}=0.5(1-\theta_{k}) $, $ \gamma=0.99/L $.\\
MMSEGM &  $ \theta_{k}=n/(n+1) $, $ \eta_{k}=\theta_{k}/3 $, $ \gamma=0.99/L $.\\
iMSEGM & $ \theta_{k}=1/(k+1) $, $ \eta_{k}=0.5(1-\theta_{k}) $, $ \delta=0.6 $, $ \zeta_{k}=1/(k+1)^2 $, $ \phi=0.5 $, $ \gamma_{1}=0.5 $.\\
iMTEGM & The parameters set are the same as algorithm (iMSEGM).\\
iMMSEGM & $ \theta_{k}=k/(k+1) $, $ \eta_{k}=\theta_{k}/3 $, $ \delta=0.6 $, $ \zeta_{k}=1/(k+1)^2 $, $ \phi=0.5 $, $ \gamma_{1}=0.5 $.\\
iMTSEGM & The parameters set are the same as algorithm (iMMSEGM).\\
VSEGM & $ \theta_{k}=1/(k+1) $, $ \eta_{k}=k/(2k+1) $, $ \phi=0.5 $, $ \gamma_{1}=0.5 $, $ f(x)=0.5x $.\\
VTEGM & The parameters set are the same as algorithm (VSEGM).\\
STEGM & $ \theta_{k}=1/(k+1) $, $ \eta_{k}=k/(2k+1) $, $ \rho=1 $, $ l=0.5 $, $ \phi=0.4 $, $ \lambda=0.5 $.\\
\bottomrule
\end{tabular}
\end{table}

\begin{example}\label{ex1}
Consider the form of linear operator $ A: R^{n}\rightarrow R^{n} $ ($ n=100, 200 $) as follows: $A(x)=Gx+f$, where $f\in R^n$ and $G=BB^{\mathsf{T}}+S+E$, matrix $B\in R^{n\times n}$, matrix $S\in R^{n\times n}$ is skew-symmetric, and matrix $E\in R^{n\times n}$ is diagonal matrix whose diagonal terms are non-negative (hence $ G $ is positive symmetric definite). We choose the feasible set as $C=\left\{x \in {R}^{n}:-2 \leq x_{i} \leq 5, \, i=1, \ldots, n\right\}$.  It is easy to see that $A$ is Lipschitz continuous monotone and its Lipschitz constant $ L=  \|G\| $.  In this numerical example, both $B, E$ entries are randomly created in $[0,2]$, $S$ is generated randomly in $[-2,2]$ and $ f = 0 $. Let $T: H \rightarrow H$ and $F: H \rightarrow H$ be provided by $T x=0.5 x$ and $F x=0.5 x$, respectively.  We obtain the solution of the problem is $ x^{*}=\{\mathbf{0}\} $. The maximum iteration $ 400 $ as a common stopping criterion and the initial values $ x^{0} = x^{1} $ are randomly generated by \emph{rand(2,1)} in MATLAB. The numerical results with iteration step and elapsed time are shown in Figs.~\ref{fig100_iter}--\ref{fig200_time}.
\begin{figure}[htbp]
\centering
\includegraphics[scale=0.6]{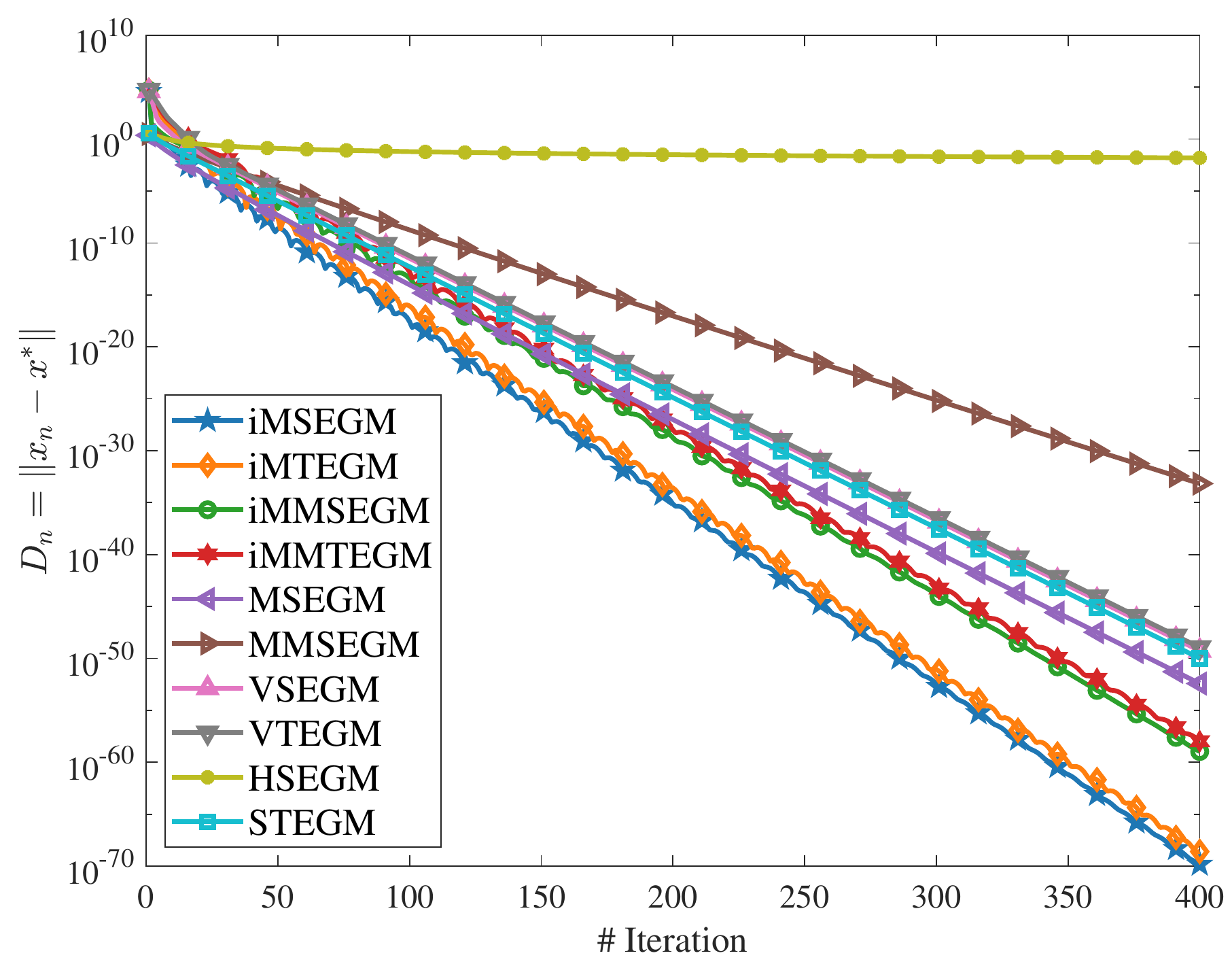}
\caption{Numerical results of Example~\ref{ex1} when $ n=100 $}
\label{fig100_iter}
\end{figure}
\begin{figure}[htbp]
\centering
\includegraphics[scale=0.6]{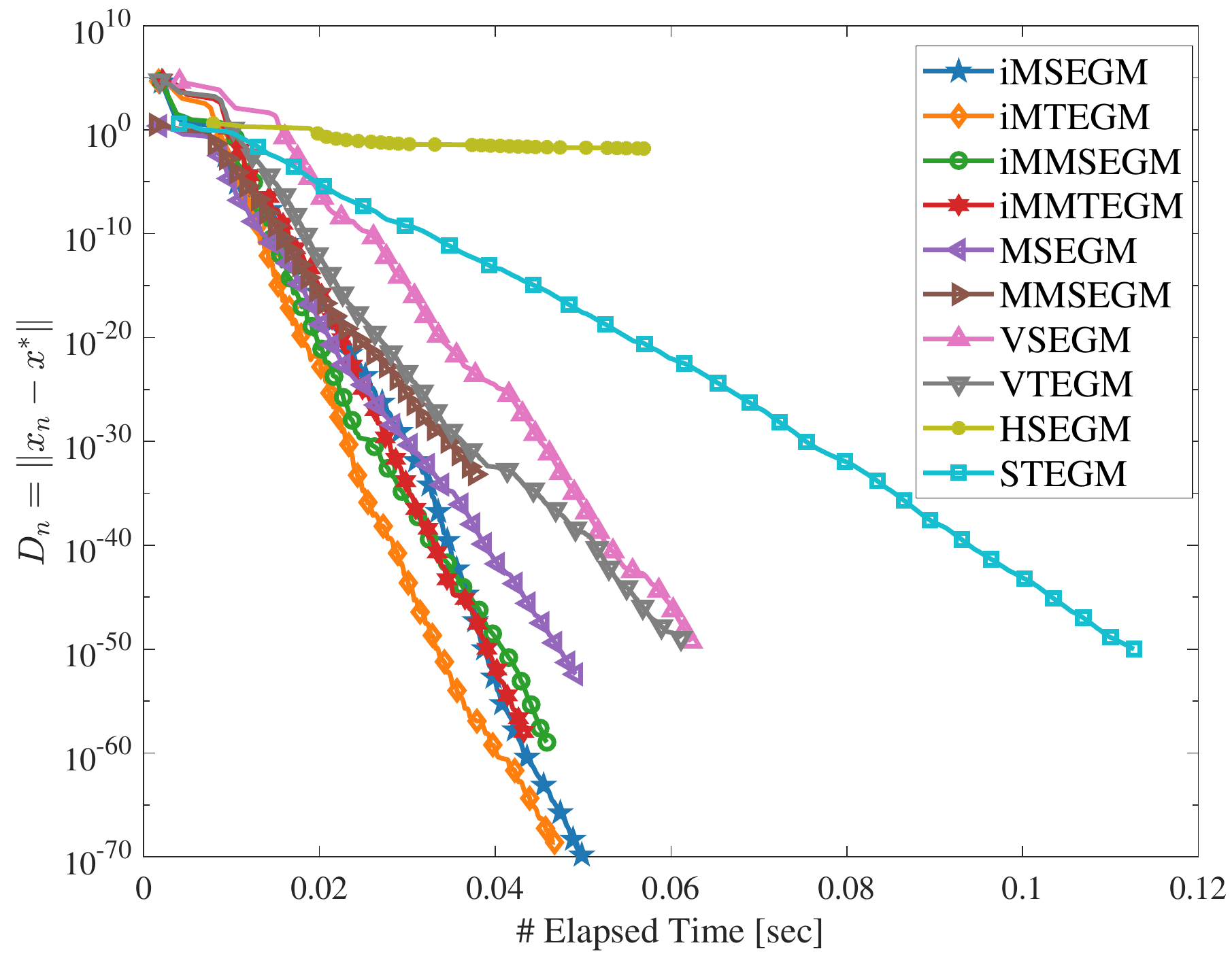}
\caption{Numerical results of Example~\ref{ex1} when $ n=100 $}
\label{fig100_time}
\end{figure}
\begin{figure}[htbp]
\centering
\includegraphics[scale=0.6]{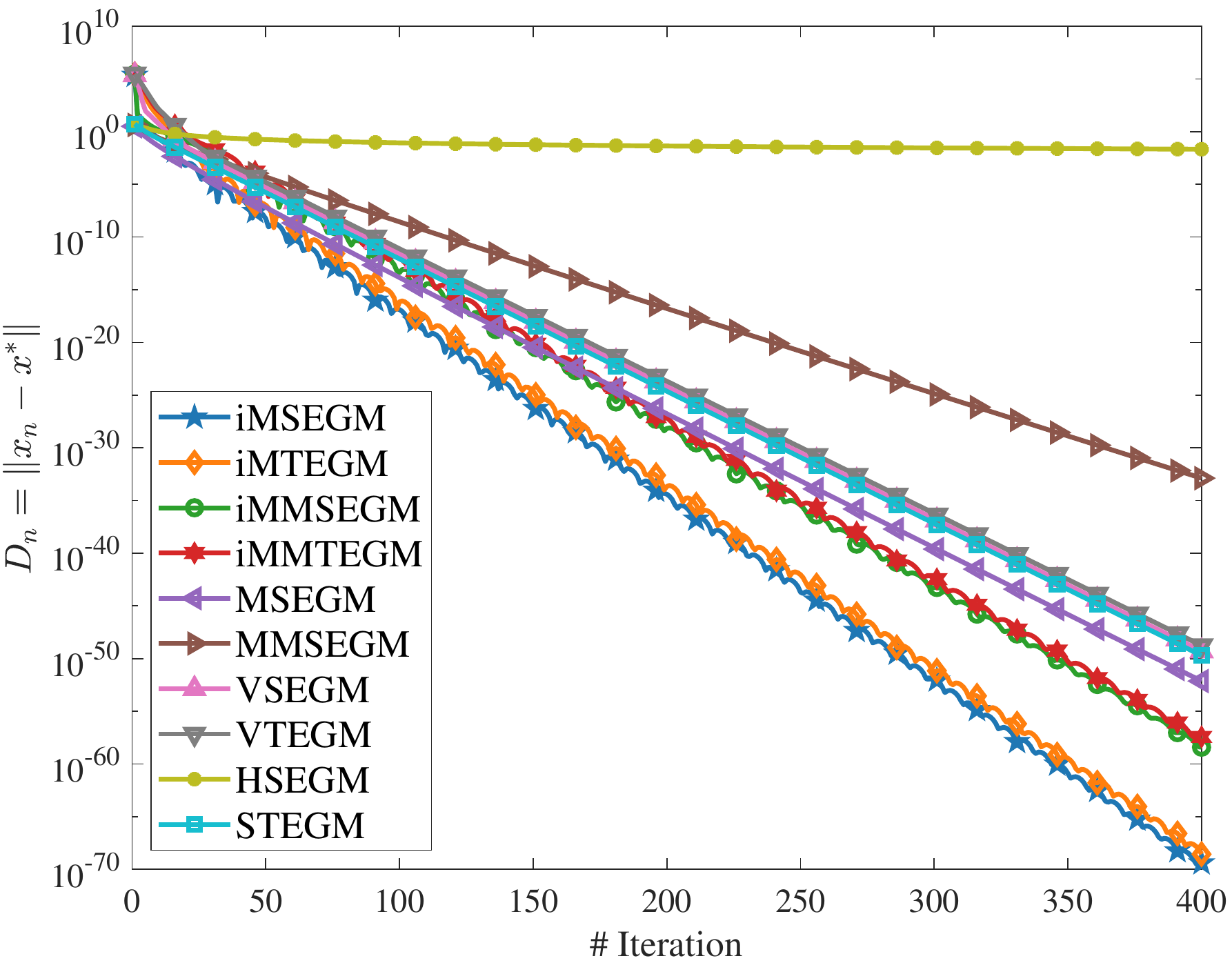}
\caption{Numerical results of Example~\ref{ex1} when $ n=200 $}
\label{fig200_iter}
\end{figure}
\begin{figure}[htbp]
\centering
\includegraphics[scale=0.6]{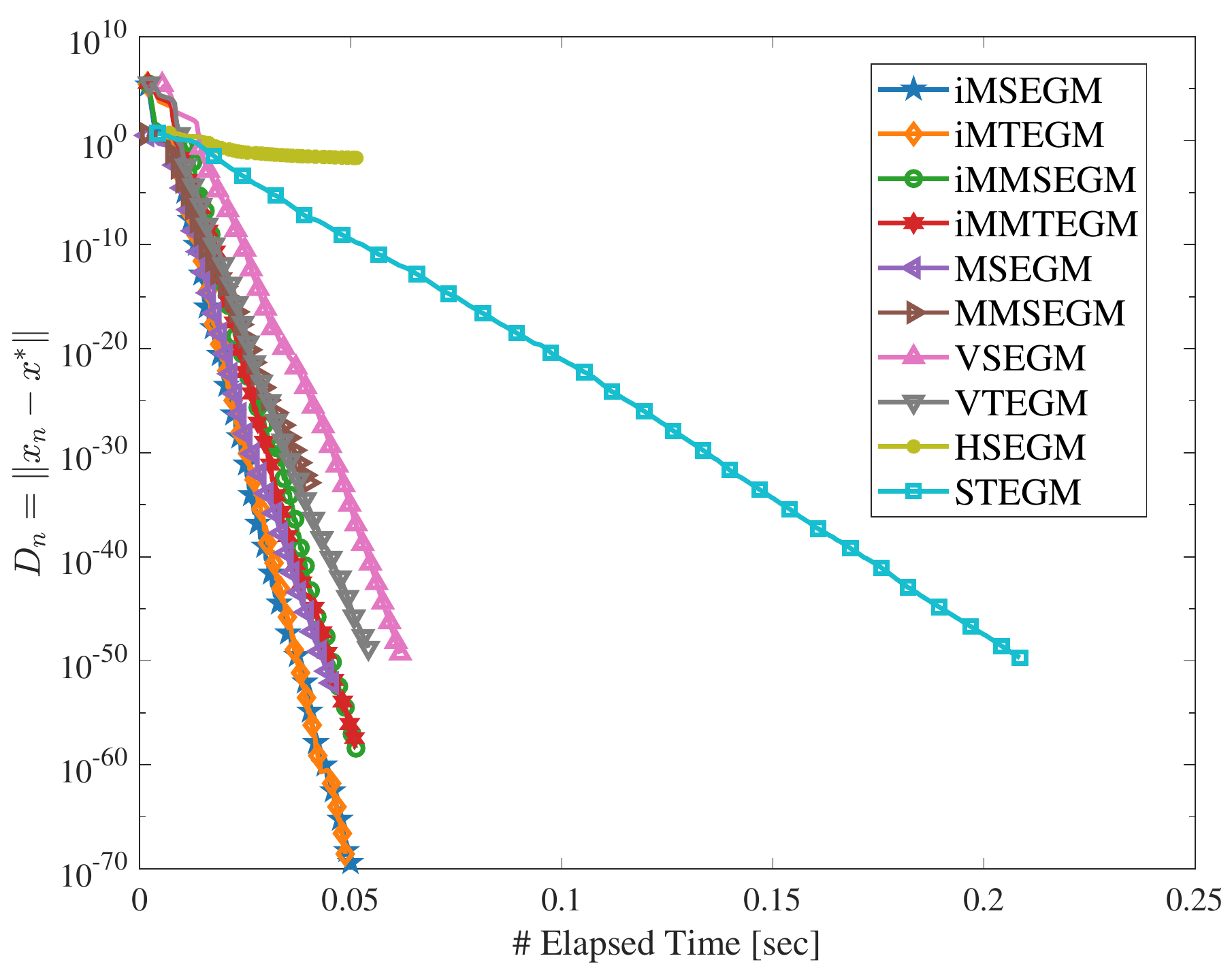}
\caption{Numerical results of Example~\ref{ex1} when $ n=200 $}
\label{fig200_time}
\end{figure}
\end{example}

\begin{example}\label{ex2}
In this numerical example, we focus on a case in Hilbert space $H=L^{2}([0,1])$. Its inner product and induced norm are defined as $\langle m, n\rangle:=\int_{0}^{1} m(t) n(t) \mathrm{d} t$ and $\|m\|:=(\int_{0}^{1}|m(t)|^{2} \mathrm{d} t)^{1 / 2}$, respectively. We choose the feasible set as the unit ball $C:=\{x \in H \mid \|x\| \leq 1\}$. Let operator $A: C \rightarrow H$ be defined as follows:
\[
(A x)(t)=\max \{ x(t),0\}=\frac{x(t)+|x(t)|}{2}\,.
\]
It is easy to verify that $ A $ is monotone and $ 1 $-Lipschitz continuous, and the projection on $ C $ is inherently explicit, that is,
\[
P_{C}(x)=\left\{\begin{array}{ll}
x, & \text { if }\|x\|_{L^{2}} \leq 1; \\
\frac{x}{\|x\|_{L^{2}}}, & \text { if }\|x\|_{L^{2}}>1.
\end{array}\right.
\]
The mapping $T: L^{2}([0,1]) \rightarrow L^{2}([0,1])$ is of the form,
\[
(T x)(t)=\int_{0}^{1} t x(r) \mathrm{d}r, \quad t \in[0,1]\,.
\]
A simple computation indicates that $T$ is $ 0 $-demicontractive and demiclosed at $0 $. Let operator $F: H \rightarrow H$ be  defined as $(F x)(t)=0.5 x(t)$. It is easy to  check that operator $ F $ is  Lipschitz continuous and strongly monotone. Through a straightforward calculation, we know that the solution of the problem is $x^{*}(t)=0 $. The maximum iteration $ 50 $ as a common stopping criterion. With two types of initial points $x^{0}(t)=x^{1}(t)=t^2$ and  $x^{0}(t)=x^{1}(t)=t+0.5\cos(t)$.  The numerical behaviors of $D_{k}=\|x^{k}(t)-x^{*}(t)\|$ with iteration step and elapsed time are described in Figs.~\ref{figt2_time}--\ref{figcost_time}.
\begin{figure}[htbp]
\centering
\includegraphics[scale=0.6]{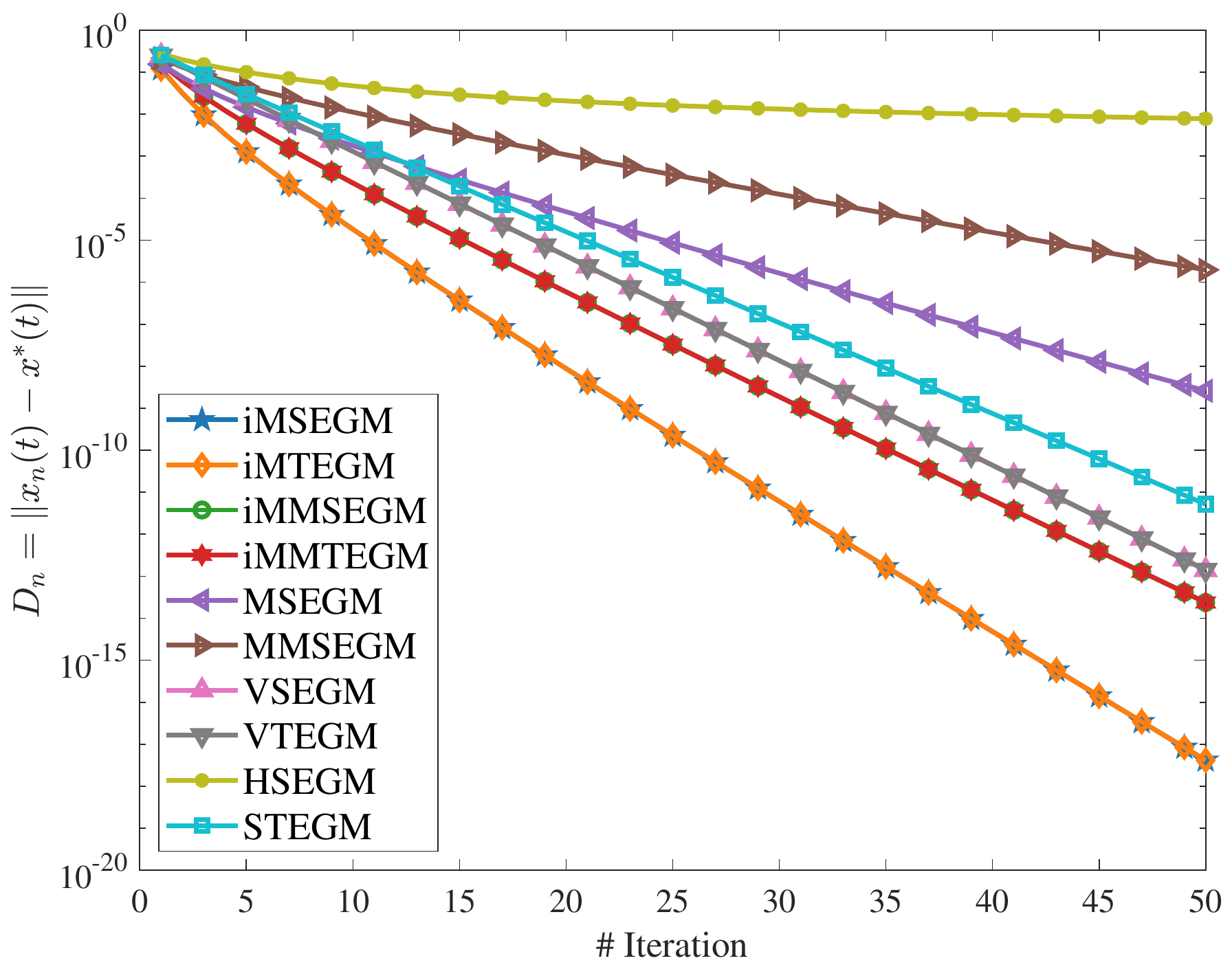}
\caption{Numerical results of Example~\ref{ex2} when $ x^{0}(t)=x^{1}(t)=t^2 $}
\label{figt2_iter}
\end{figure}
\begin{figure}[htbp]
\centering
\includegraphics[scale=0.6]{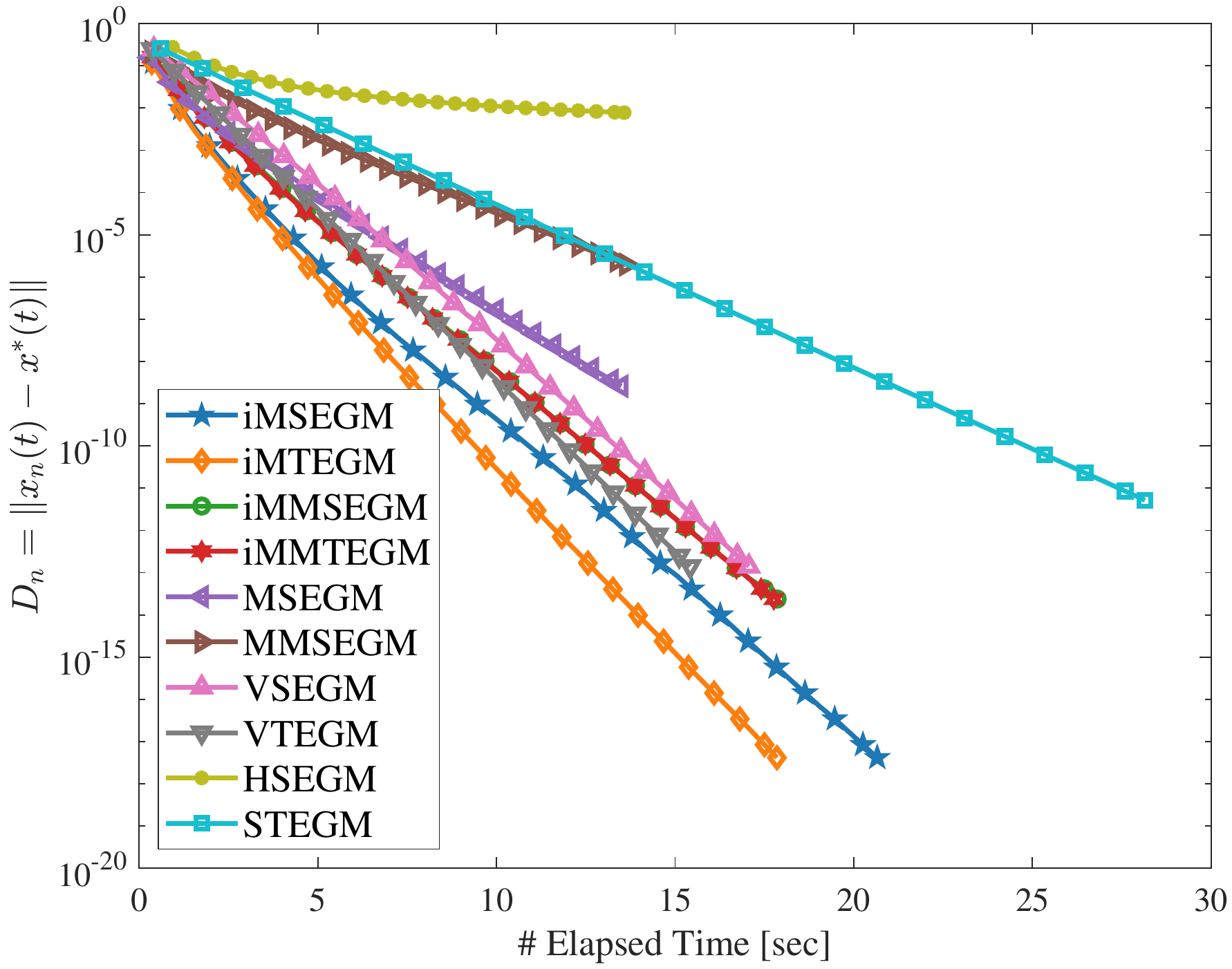}
\caption{Numerical results of Example~\ref{ex2} when $ x^{0}(t)=x^{1}(t)=t^2 $}
\label{figt2_time}
\end{figure}
\begin{figure}[htbp]
\centering
\includegraphics[scale=0.6]{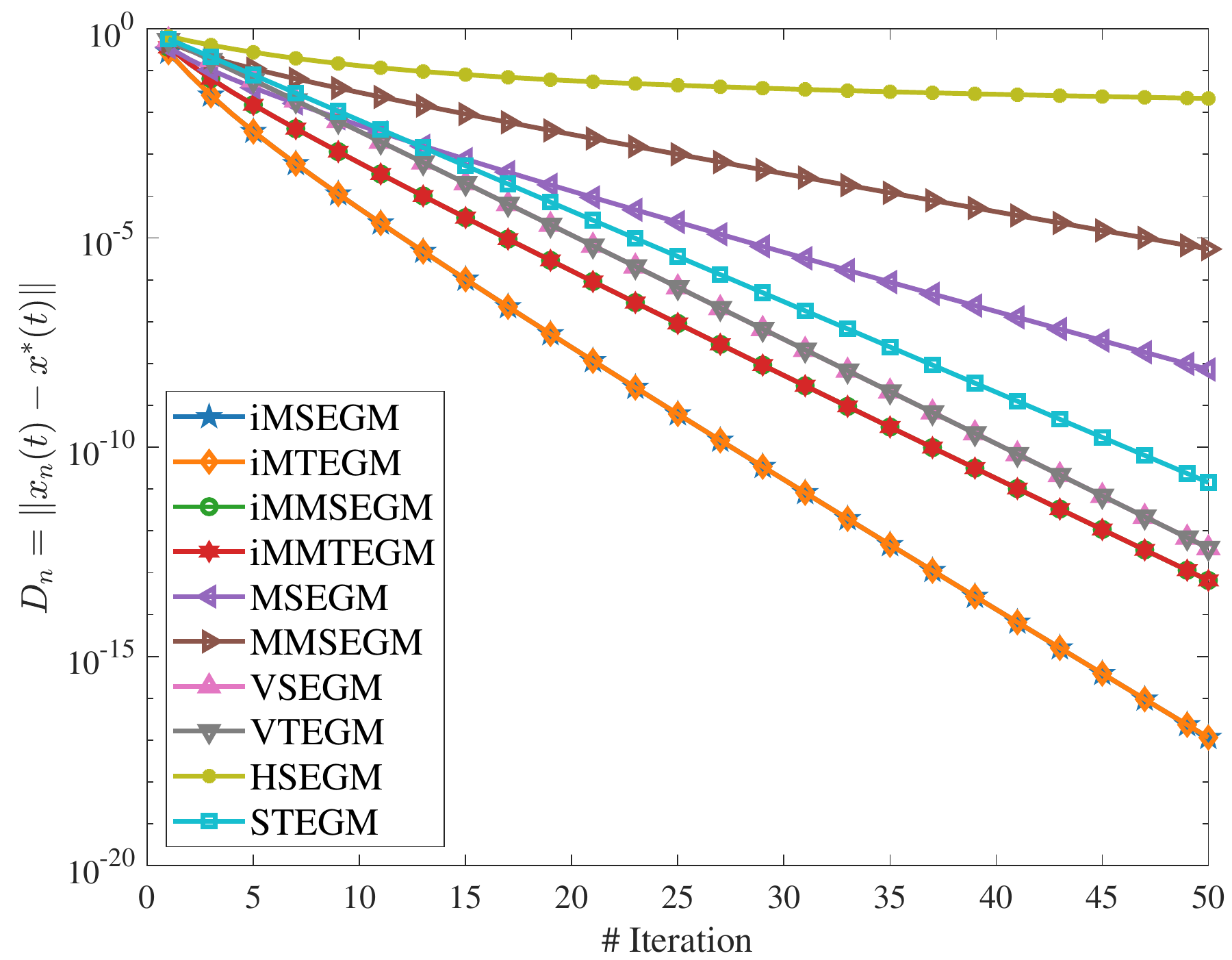}
\caption{Numerical results of Example~\ref{ex2} when $ x^{0}(t)=x^{1}(t)=t+0.5\cos(t)$}
\label{figcost_iter}
\end{figure}
\begin{figure}[htbp]
\centering
\includegraphics[scale=0.6]{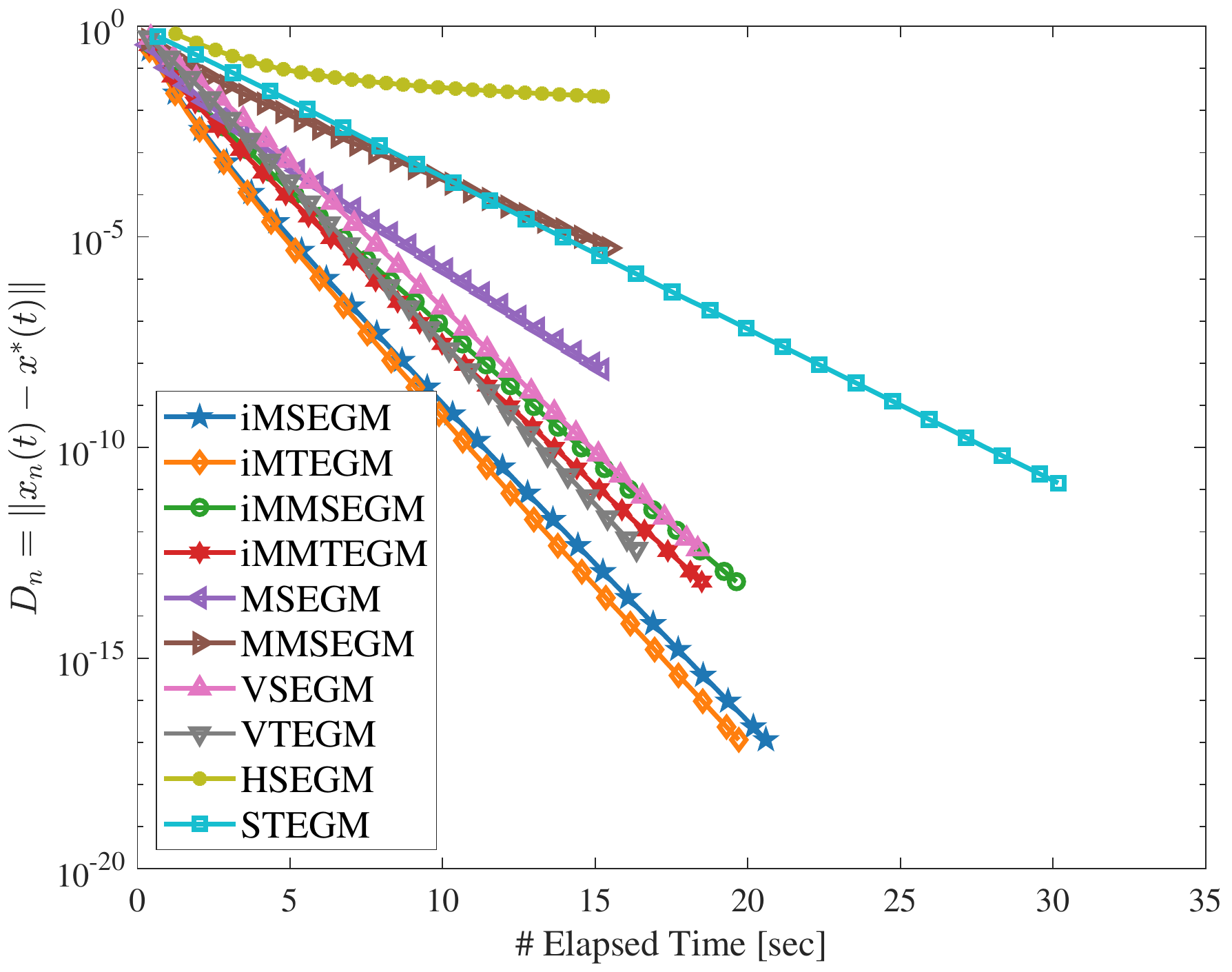}
\caption{Numerical results of Example~\ref{ex2} when $ x^{0}(t)=x^{1}(t)=t+0.5\cos(t) $}
\label{figcost_time}
\end{figure}
\end{example}

\begin{remark}
From the above numerical examples appearing in finite and infinite dimensions, it can be seen that the proposed algorithms have higher convergence accuracy under the same conditions. The convergence speed of our algorithms is faster than that of some known algorithms in the literature, and these results are independent of the size of dimensions and the selection of initial values. More importantly, the algorithms obtained in this paper automatically updates the step size through a simple calculation, which makes our suggested algorithms work without the prior information of the Lipschitz constant of the mapping.
\end{remark}

\section{Final remarks}\label{sec5}
In this research, we presented four new inertial extragradient algorithms with a new simple step size for seeking a common solution of the monotone variational inequality problems and the fixed point problems in a Hilbert space. The advantage of the four algorithms proposed in this paper is that we do not need to know the prior information of Lipschitz constants in advance. In addition, our algorithms add an inertial term, which significantly improves the convergence speed of our algorithms. We have proved strong convergence of the suggested algorithms under certain suitable conditions imposed on parameters. Some numerical examples of finite and infinite dimensions have been presented to demonstrate the performance of the algorithms and compare them with some previously existing ones. The four algorithms obtained in this paper improve and extend the results of some existing literature.

\end{document}